\newcommand{\cA}{\mathcal{A}}
\newcommand{\cL}{\mathcal{L}}
\newcommand{\cM}{\mathcal{M}}
\newcommand{\cP}{\mathcal{P}}
\newcommand{\bL}{\mathbb{L}}
\newcommand{\WF}{Q^{\text{WF}}}
\newcommand{\VP}{\text{VP}}
\newcommand{\WVP}{\text{WVP}}
\newcommand{\ZFC}{\text{ZFC}}
\newcommand{\id}{\text{id}}
\newcommand{\crit}{\text{crit}}
\newcommand{\ot}{\text{ot}}
\newcommand{\comp}{\text{comp}}
\newcommand{\I}{I}
\newcommand{\cof}{\text{cof}}
\newcommand{\Ord}{\text{Ord}}
\newcommand{\LST}{\text{LST}}
\newcommand{\ULST}{\text{ULST}}
\newcommand{\footnotei}[1]{}
\newtheorem{definition}{Definition}[section]
\newtheorem{lemma}[definition]{Lemma}
\newtheorem{theorem}[definition]{Theorem}
\newtheorem{corollary}[definition]{Corollary}
\newtheorem{claim}[definition]{Claim}
\newtheorem{proposition}[definition]{Proposition}
\newtheorem{question}[definition]{Question}
\newtheorem{fact}[definition]{Fact}
\begin{document}
	
	%\renewcommand{\qedsymbol}{\filledbox}
	%Good resources for looking up how to do stuff:
	%Binary operators: http://www.access2science.com/latex/Binary.html
	%General help: http://en.wikibooks.org/wiki/LaTeX/Mathematics
	%Or just google stuff
	
	\title{Model-theoretic characterizations of large cardinals (Re)${}^2$visited\footnote{All results of this paper are also part of the second author's PhD thesis \cite{osinski2024}.\\This material is based upon work while the first author was supported by the National Science Foundation under Grant No. DMS-2137465 and DMS-2339018.}}
    \author[1]{Will Boney}
    \author[2]{Jonathan Osinski}
    \affil[1]{Mathematics Department, Texas State University, San Marcos, TX, USA}
    \affil[2]{Fachbereich Mathematik, Universität Hamburg, Hamburg, Germany}
	\date{\today}
 	\maketitle

    \begin{abstract}
        \noindent We characterize several large cardinal notions by model-theoretic properties of extensions of first-order logic. We show that $\Pi_n$-strong cardinals, and, as a corollary, ``Ord is Woodin" and weak Vopěnka's Principle, are characterized by compactness properties involving Henkin models for sort logic. This provides a model-theoretic analogy between Vopěnka's Principle and weak Vopěnka's Principle. We also characterize huge cardinals by compactness for type omission properties of the well-foundedness logic $\bL(\WF)$, and show that the compactness number of the Härtig quantifier logic $\bL(I)$ can consistently be larger than the first supercompact cardinal. Finally, we show that the upward Löwenheim-Skolem-Tarski number of second-order logic $\bL^2$ and of the sort logic $\bL^{s,n}$ are given by the first extendible and $C^{(n)}$-extendible cardinal, respectively.
    \end{abstract}

\tableofcontents
    
	\section{Introduction}
	Large cardinals are closely tied to model-theoretic properties of \emph{strong logics}, i.e., of extensions of first-order logic. Early on in the study of large cardinals, Tarski proposed weakly and strongly compact cardinals still prominent today as cardinals witnessing certain compactness principles for infinitary logics (cf. \cite{tar1962}). Subsequently, several other large cardinals such as supercompact and extendible cardinals received model-theoretic characterizations (cf. \cite{mag1971, ben1978}). Further, Vopěnka's Principle was shown to be equivalent to axiom schemas postulating that all logics haver certain downward Löwenheim-Skolem and compactness properties (cf. \cite{mag2011, mak1985}). In the last years, there has been a resurgence of this type of research and the large cardinal strengths of a series of model-theoretic properties of strong logics were considered (cf. \cite{mag2011, bagaria2016symbiosis, bon2020, gal2020, boney2024model, holy2024, gitman2024upward, osinski2024Henkin, osinski2024}).  

    While the above cited results on connections between Vopěnka's Principle and model-theoretic properties of logics provided some global equivalence, crucially, many of these newer results (e.g. from \cite{bon2020, gitman2024upward, osinski2024Henkin, osinski2024}) allow us to understand this relationship in a much more fine grained way. More concretely, the first author \cite{bon2020} showed that, for a natural number $n$, the existence of \emph{$C^{(n)}$-extendible cardinals} is equivalent to the existence of a \emph{compactness number} for the \emph{sort logic} $\bL^{s,n}$ (cf. Sections \ref{sec:AMT} and \ref{sec:LCs} for definitions). The $C^{(n)}$-extendible cardinals were introduced by Bagaria (cf. \cite{bag2012}) as a strengthening of the more familiar extendible cardinals and he showed that axioms stating their existence form, with rising $n$, a hierarchy of stronger and stronger axioms bounded by Vopěnka's Principle. Moreover, he showed that Vopěnka's Principle can be restricted in uniform ways such that these restrictions are precisely equivalent to the existence of $C^{(n)}$-extendible cardinals. The first author's results about compactness numbers of sort logic thus show that this hierarchy below Vopěnka's Principle is precisely mirrored by compactness properties of logics. Combining results of Poveda \cite[Theorem 5.2.3]{poveda2020} and the first author \cite[Theorem 4.13]{bon2020} one can show that a similar connection holds between Löwenheim-Skolem properties of sort logics and $C^{(n)}$-extendible cardinals. And strengthening the notions of \emph{Henkin models} for abstract logics (cf. Section \ref{sec:WVP}) to \emph{strong} Henkin models, Poveda and the second author \cite{osinski2024Henkin} showed  that certain compactness principles for strong Henkin models of sort logic provide yet another column to this hierarchy. 

	This paper can be seen as a follow up paper to \cite{bon2020} and \cite{boney2024model} and adds new model-theoretic characterizations of several large cardinals. This provides new insights into the relation of Vopěnka's Principle to model theory of strong logics. We will show that by considering \emph{upward} Löwenheim-Skolem properties of strong logics provides a fourth characterization of the hierarchy of large cardinal axioms below Vopěnka's Principle by model-theoretic means (cf. Section \ref{sec:ULST}). More concretely, we show that the \emph{ULST number} introduced in \cite{gal2020} of second-order logic $\bL^2$ is the first extendible cardinal. This confirms a conjecture from \cite{gal2020}. The result generalizes to sort logic, in the sense that the existence of a ULST number of the sort logic $\bL^{s,n}$ is equivalent to the existence of a $C^{(n)}$-extendible cardinal. As a corollary, we get a characterization of Vopěnka's Principle. 

    Moreover, we will consider what is known as \emph{weak Vopěnka's Principle}, which is an axiom schema motivated by a category-theoretic characterization of Vopěnka's Principle (cf. \cite{adamek1988}). Recently, Wilson \cite{wilson2020weak, wilson2022large} showed that weak Vopěnka's Principle is indeed strictly weaker than Vopěnka's Principle, solving a longstanding open problem. Bagaria and Wilson \cite{bagaria2023weak} showed that below weak Vopěnka's Principle, there is also hierarchy of stronger and stronger large cardinal axioms, stating the existence of what they call \emph{$\Pi_n$-strong} cardinals. There are thus analogous \emph{patterns} (cf. \cite{bagaria2024patterns}) in the large cardinal hierachy below Vopěnka's Principle and weak Vopěnka's Principle. Using compactness properties for Henkin models, in Section \ref{sec:WVP} we will give a characterization of $\Pi_n$-strong cardinals. This shows that the pattern below weak Vopěnka's Principle can also be obtained by model-theoretic properties of strong logics, as was the case for the pattern below Vopěnka's Principle, establishing a novel analogy between the two. As a corollary, we obtain a model-theoretic characterization of weak Vopěnka's Principle.

    We will further show that huge cardinals -- and thus, in particular, cardinals with higher consistency strength than Vopěnka's Principle -- can be characterized by a model-theoretic property of the logic $\bL(\WF)$, answering a question of Wilson (cf. Section \ref{sec:huge}). Finally, we will show how to separate the compactness number of the logic $\bL(I)$ from supercompact cardinals (cf. Section \ref{sec:compactnessLI}). 
	
	\section{Preliminaries}
	\subsection{Abstract model theory}\label{sec:AMT}
	We will use the following standard notions from abstract model theory. A \emph{language} is a set of (typically) finitary relation, function, and constant symbols. An \emph{abstract} or \emph{strong} logic $\cL$ consists of a definable class function that assigns to every language $\tau$ a set of \emph{sentences} $\cL[\tau]$, and of a definable relation $\models_\cL$ that may hold between $\tau$-structures $\mathcal A$ and members of $\cL[\tau]$. If $\mathcal A \models_\cL \varphi$, we say that $\mathcal A$ \emph{is a model of} $\varphi$. There are several standard niceness properties one assumes for the relation $\models_\cL$, e.g., that isomorphic structures are models of the same sentences. A precise definition of the properties needed can be found in \cite[Definition 2.1]{boney2024model} (which is itself a sublist of classical definitions). Note that we assume that $\cL[\tau]$ is always a set, as opposed to a proper class.
	
	If $\sigma$ and $\tau$ are languages, we say that a \emph{renaming} is a bijection $f: \sigma \rightarrow \tau$ which restricts to bijections between the sets of relation, function, and constant symbols in $\sigma$ and $\tau$, all while preserving their respective arities. We may turn a $\sigma$-structure $\mathcal A$ into a $\tau$-structure $f(\mathcal A)$ along $f$ in the obvious way. As part of the definition of an abstract logic, we demand that every renaming comes with a bijection\footnote{We use the same name for the renaming $\sigma \to \tau$ of languages and the induced maps of structures and formulas because there is no risk of confusion.}
 $f: \mathcal L[\sigma] \rightarrow \mathcal L[\tau]$ such that for any $\sigma$-structure and any $\varphi \in \mathcal L[\sigma]$, 
	\[
	\mathcal A \models_\cL \varphi \text{ if and only if } f(\mathcal A) \models_\cL f(\varphi).\footnote{
    The definition in \cite{boney2024model} includes the requirement that the function $f$ is unique. We do not need this uniqueness for our arguments, so we will not require this throughout the paper. Should one want to include uniqueness in the definition, one has to change the usual definitions of the concrete logics (like, say, second-order logic) one works with slightly: note that usually, the uniqueness requirement is not fulfilled, because given an $f$ as stated, one might always switch around two tautologies to obtain a second function $g$ with the desired property. It is however easy to obtain a version of the logic as needed in the following way: for every formula $\varphi$ of a logic $\cL$, consider the Scott equivalence class $[\varphi]$ consisting of all formulas of minimal rank equivalent to $\varphi$, and let $\cL[\tau]$ consist of these equivalence classes $[\varphi]$ instead of the formulas themselves.
    
    Furthermore, even if one sticks to the usual definitions and so the function $f$ is not unique, for the usual logics like second-order logic there is a canonical definition only dependent on the renaming between the vocabularies which identifies such an $f$.}
	\]
	If $f$ is a map as above and $T \subseteq \cL[\sigma]$, we call $f``T$ a \emph{copy} of the $\cL$-theory $T$.
	
	Let us mention the logics beyond first-order logic that we will work with. The logic $\bL(\WF)$ adds to first-order logic the quantifier $\WF$ with the semantics
	\[
	\mathcal A \models \WF xy \varphi(x,y) \text{ iff } \{(a,b) \in A^2 \colon \mathcal A \models \varphi(a,b) \} \text{ is well-founded.}
	\]
	The logic $\bL(I)$ adds to first-order logic the quantifier $I$ with the semantics
	\[
	\mathcal A \models I x y \varphi(x) \psi(y) \text{ iff } |\{a \in A \colon \mathcal A \models \varphi(a) \}| = |\{a \in A \colon \mathcal A \models \psi(a) \}|.
	\]
	Second-order logic $\bL^2$ allows quantification over relations on $A$. It is well known, that there is a sentence of second-order logic known as \emph{Magidor's $\Phi$}, axiomatizing the class of structures isomorphic to some $V_\alpha$ for some limit ordinal $\alpha$ (cf. \cite{mag1971}). It is simple to adjust this construction to get a sentence $\Phi^*$ axiomatizing the class of structures isomorphic to some $V_\alpha$ for some arbitrary ordinal $\alpha$.
	
	\emph{Sort logic} introduced by Väänänen \cite{vää1979, vää2014} extends $\bL^2$ by so-called \emph{sort quantifiers} written as $\widetilde{\forall}$ and $\widetilde{\exists}$, which take $m$-ary relation symbols $X$ for some $m \in \omega$. A formula $\widetilde{\exists} X \varphi(X)$ is true in a structure $\mathcal A$ if $\mathcal A$ can be expanded by an additional domain $B$ and a set $Y \subseteq B^m$ such that the expanded structure satisfies the formula $\varphi(B)$, i.e., sort quantifiers search outside the structure itself, looking for any set in $V$ satisfying some relation described by $\varphi$. Sort logic is graded by natural numbers $n$ into the logics $\bL^{s,n}$, which restrict to $n$ alternations of $\widetilde \exists$ and $\widetilde \forall$, as allowing arbitrary alternations would run into definability of truth issues. Writing $C^{(n)}$ to denote the club class of ordinals $\alpha$ such that $V_\alpha$ is a $\Sigma_n$-elementary substructure of the universe $V$, there is a sentence $\Phi^{(n)} \in \bL^{s,n}$ axiomatizing the class of all structures isomorphic to some $V_\alpha$ for $\alpha \in C^{(n)}$ (cf., e.g., \cite[Corollary 1.2.17]{osinski2024}).\footnote{This seems to be a folklore result, implicit, e.g., in \cite{vää2014}. Initial explicit proofs used a (countable) \emph{theory} in $\bL^{s, n}$ (cf., e.g., \cite[Lemma 4.10]{bon2020}), but the second author's thesis shows how to do it in a single sentence.}
	
	Given regular cardinals $\kappa \geq \lambda$, any of these logics can be expanded to infinitary versions by adding conjunctions and disjunctions of size $< \kappa$ and first-order quantifiers of size $< \lambda$. In the case of second-order and sort logics, the infinitary versions $\bL^2_{\kappa, \lambda}$ and $\bL^{s,n}_{\kappa, \lambda}$ additionally come with second-order and sort quantifiers of size $< \lambda$.
	
	Abstract logics allow the generalization of model-theoretic concepts to arbitrary logics. A theory $T \subseteq \cL$, i.e., a set of $\cL$-sentences, is called ${<}\kappa$-satisfiable for a cardinal $\kappa$, if every $T_0 \in \mathcal P_\kappa T$ is satisfiable. A cardinal $\kappa$ is called the \emph{compactness number} $\comp(\cL)$ of a logic $\cL$ if it is the \emph{smallest} cardinal such that every ${<}\kappa$-satisfiable $\cL$-theory is satisfiable. It is called the \emph{Löwenheim-Skolem-Tarski} number $\LST(\cL)$ if it is the smallest cardinal $\kappa$ such that if $\tau$ is a vocabulary of size $< \kappa$ and $\varphi \in \cL[\tau]$, then any $\tau$-structure $\mathcal A \models \varphi$ has a substructure $\mathcal B$ of size $|B| < \kappa$ and satisfying $\varphi$.\footnote{Note that sometimes the $\LST$ number is defined demanding the existence of full $\cL$-elementary substructures, as opposed to substructures preserving satisfaction of single sentences. For many logics it is known that the two definitions are equivalent. For other logics, the notion of `elementary substructure' can be tricky to define, so we focus on ordinary substructure.}
	
	\subsection{Large cardinals}\label{sec:LCs}
	Let us state the large cardinal notions we will consider and some related results.
	
	For a natural number $n$ and an ordinal $\lambda$, a cardinal $\kappa$ is called \emph{$\lambda$-$\Pi_n$-strong} if for any $\Pi_n$-definable (without parameters) class $A$, there is an elementary embedding $j: V \rightarrow M$ such that $\crit(j) = \kappa$, $j(\kappa) > \lambda$, $V_\lambda  \subseteq M$, and $A \cap V_\lambda \subseteq A^M$. It is \emph{$\Pi_n$-strong} if it is $\lambda$-$\Pi_n$-strong for every ordinal $\lambda$. The $\Pi_1$-strong cardinals are precisely the more familiar strong cardinals. Moreover, with rising $n$, the assertion that there is a $\Pi_n$-strong cardinal gains consistency strength. The definition and the mentioned results can be found in \cite{bagaria2023weak}. 
	
	For a definable (with parameters) class $A$ and an ordinal $\lambda$, $\kappa$ is called \emph{$\lambda$-$A$-strong} if there is an elementary embedding $j: V \rightarrow M$ such that $\crit(j) = \kappa$, $j(\kappa) > \lambda$, $V_\lambda \subseteq M$, and $A \cap V_\lambda = A^M \cap V_\lambda$, where $A^M = \{x \in M \colon M \models x \in A\}$. It is $A$-strong if it is $\lambda$-$A$-strong for every $\lambda$.
	
	By ``\emph{Ord is Woodin}", we denote the axiom schema stating that for every class $A$, there is an $A$-strong cardinal.
	
	A cardinal $\kappa$ is \emph{supercompact} if for any $\lambda > \kappa$ there is an elementary embedding $j: V \rightarrow M$ with $\crit(j) = \kappa$, $j(\kappa) > \lambda$, and $M^\lambda \subseteq M$.
	
	A cardinal $\kappa$ is called $C^{(n)}$-extendible if for every $\alpha > \kappa$, there is an elementary embedding $j: V_\alpha \rightarrow V_\beta$ such that $\crit(j) = \kappa$, $j(\kappa) > \alpha$, and $j(\kappa) \in C^{(n)}$ (cf. \cite{bag2012}). It is known that $\kappa$ is $C^{(n)}$-extendible iff for every $\alpha > \kappa$ such that $\alpha \in C^{(n)}$ there is a $\beta \in C^{(n)}$ and an elementary embedding $j: V_\alpha \rightarrow V_\beta$ such that $\crit(j) = \kappa$ (cf. \cite{gitman2019CnExt}). The $C^{(1)}$-extendible cardinals are precisely the more familiar extendible cardinals (cf. \cite{bag2012}). 
	
	\emph{Vopěnka's Principle} (VP) is the axiom schema stating that in every definable class $\mathcal C$ of structures in a shared language, there are distinct $\mathcal A, \mathcal B \in \mathcal C$ such that there is an elementary embedding $e: \mathcal A \rightarrow \mathcal B$. We write $\VP(\Pi_n)$ for the restriction of the statement of VP to classes definable by a $\Pi_n$-formula. VP is closely connected to the existence of $C^{(n)}$-extendible cardinals, as well as to model theory of strong logics:
    \begin{fact}[Bagaria {\cite[Corollary 4.15]{bag2012}}]
        For $n \geq 2$, $\VP(\Pi_{n+1})$ holds if and only if there is a $C^{(n)}$-extendible cardinal.
    \end{fact}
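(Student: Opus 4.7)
The plan is to prove the two directions separately, with the harder direction being $\VP(\Pi_{n+1})$ implies the existence of a $C^{(n)}$-extendible cardinal.

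For the easier direction, assume $\kappa$ is $C^{(n)}$-extendible and let $\mathcal{C}$ be a proper-class-sized $\Pi_{n+1}$-definable class of $\tau$-structures, for some fixed language $\tau$. Fix any $\mathcal{A}_0 \in \mathcal{C}$ of rank above $\kappa$, and pick $\alpha \in C^{(n)}$ large enough that $\mathcal{A}_0 \in V_\alpha$. Apply $C^{(n)}$-extendibility to obtain $j: V_\alpha \to V_\beta$ with $\crit(j) = \kappa$, $j(\kappa) > \alpha$, and $j(\kappa), \beta \in C^{(n)}$. Then $j \rest \mathcal{A}_0: \mathcal{A}_0 \to j(\mathcal{A}_0)$ is an elementary embedding of structures in $\mathcal{C}$, distinct because $\crit(j) \leq \text{rank}(\mathcal{A}_0)$. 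The key step is verifying $j(\mathcal{A}_0) \in \mathcal{C}$: the statement ``$\mathcal{A}_0 \in \mathcal{C}$'' is $\Pi_{n+1}$ and hence reflects between $V_\alpha$ and $V$ (since $\alpha \in C^{(n)}$), transfers to $V_\beta$ via $j$, and reflects up to $V$ (since $\beta \in C^{(n)}$).

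For the harder direction, suppose $\VP(\Pi_{n+1})$ holds and, toward contradiction, that no cardinal is $C^{(n)}$-extendible. For each cardinal $\kappa$, let $\alpha(\kappa)$ be the least ordinal in $C^{(n)}$ above $\kappa$ such that no elementary $j: V_{\alpha(\kappa)} \to V_\beta$ has $\crit(j) = \kappa$ with $j(\kappa) \in C^{(n)}$; this exists by our assumption and the equivalent formulation of $C^{(n)}$-extendibility mentioned in the preliminaries. Define
\[
\mathcal{C} = \bigl\{ (V_{\alpha(\kappa)}, \in, \kappa) : \kappa \text{ a cardinal} \bigr\}.
\]
Since $C^{(n)}$ is $\Pi_n$-definable and the failure of an extendibility embedding is $\Pi_{n+1}$-expressible (with $C^{(n)}$ as a $\Pi_n$ parameter), the class $\mathcal{C}$ is $\Pi_{n+1}$-definable for $n \geq 2$ after a careful complexity analysis. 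Applying $\VP(\Pi_{n+1})$ yields distinct $\kappa_1 < \kappa_2$ and an elementary embedding $e: (V_{\alpha(\kappa_1)}, \in, \kappa_1) \to (V_{\alpha(\kappa_2)}, \in, \kappa_2)$, so $e$ restricts to $V_{\alpha(\kappa_1)} \to V_{\alpha(\kappa_2)}$ sending $\kappa_1$ to $\kappa_2$. A standard argument (using that the distinguishing of $\kappa_1$ in the structure pins down the critical point) shows $\crit(e) = \kappa_1$. Since $\alpha(\kappa_2) \in C^{(n)}$, this witnesses $C^{(n)}$-extendibility of $\kappa_1$ up to the ordinal $\alpha(\kappa_1)$, contradicting the choice of $\alpha(\kappa_1)$.

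The main obstacle is the complexity calculation: verifying that the class $\mathcal{C}$ is genuinely $\Pi_{n+1}$-definable and that the parameters one adds to the structures in $\mathcal{C}$ (to guarantee $\crit(e) = \kappa_1$ and $e(\kappa_1) = \kappa_2$) do not push the definition to $\Pi_{n+2}$. The assumption $n \geq 2$ is essential here, because then $C^{(n)}$ becomes a usable parameter at the correct complexity level, and the inner quantifier ``there is no $j$ and $\beta$'' can be absorbed without increasing the complexity beyond $\Pi_{n+1}$.
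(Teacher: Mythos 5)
The paper does not prove this statement; it is quoted directly from Bagaria \cite[Corollary 4.15]{bag2012}, so your proposal must stand on its own, and both directions break down at exactly the points where the real work of Bagaria's argument lies. In the easy direction, the step ``reflects up to $V$ (since $\beta \in C^{(n)}$)'' is false: $\beta \in C^{(n)}$ gives $V_\beta \prec_{\Sigma_n} V$, so $\Sigma_{n+1}$ statements with parameters in $V_\beta$ transfer upward, but $\Pi_{n+1}$ statements do not --- a universal quantifier verified over $V_\beta$ says nothing about witnesses outside $V_\beta$. You also cannot simply arrange $\beta \in C^{(n+1)}$, since that is not what $C^{(n)}$-extendibility provides (it would essentially be $C^{(n+1)}$-extendibility). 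The standard repair reverses the direction of the embedding: choose $\alpha \in C^{(n+1)}$ with $\mathcal A_0 \in V_\alpha$, observe that inside $V_\beta$ the pair $(\mathcal A_0,\, j \rest \mathcal A_0)$ witnesses ``there is a structure in $V_{j(\kappa)}$ satisfying the $\Pi_{n+1}$ definition and embedding elementarily into $j(\mathcal A_0)$'' (the definition transfers \emph{downward} to $V_\beta$ correctly), pull this statement back through $j$ to obtain some $\mathcal B \in V_\kappa$ with an elementary embedding into $\mathcal A_0$, and verify $\mathcal B \in \mathcal C$ using the $\Sigma_{n+1}$-correctness of $V_\alpha$. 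So one produces a small member of $\mathcal C$ mapping into $\mathcal A_0$, not $j(\mathcal A_0)$ as a new member of $\mathcal C$.

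In the hard direction the central gap is the claim that $\crit(e) = \kappa_1$. An elementary $e: (V_{\alpha(\kappa_1)}, \in, \kappa_1) \to (V_{\alpha(\kappa_2)}, \in, \kappa_2)$ must move $\kappa_1$ (so $\crit(e) \leq \kappa_1$), but nothing prevents it from moving a smaller ordinal first --- for instance an embedding with critical point some measurable below $\kappa_1$ --- and no amount of decoration of the structures by definable predicates can exclude this. The classical argument therefore derives a contradiction at $\delta := \crit(e)$ rather than at $\kappa_1$, and this is precisely what forces the extra machinery in Bagaria's proof: the ordinals $\alpha(\kappa)$ must be chosen closed under the witness function so that $\alpha(\delta) < \alpha(\kappa_1)$, and the structures must carry enough information (e.g.\ a predicate for $C^{(n)}$ below the top, or membership in $C^{(n+1)}$) for $e \rest V_{\alpha(\delta)}$ to be recognized as an embedding into some $V_\gamma$ with $\gamma \in C^{(n)}$, contradicting the choice of $\alpha(\delta)$. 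Separately, your minimality clause in the definition of $\alpha(\kappa)$ adds a conjunct of the form ``for every smaller $\alpha' \in C^{(n)}$ there exists an embedding,'' which is $\Sigma_{n+1}$ and pushes the definition of $\mathcal C$ out of $\Pi_{n+1}$; since (in your intended use) the contradiction only needs the non-existence clause at $\alpha(\kappa_1)$, minimality should be dropped from the definition, after which the remaining conjuncts are indeed $\Pi_{n+1}$.
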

	\begin{fact}[Bagaria, Makowsky]\label{fact:BagariaMakowsky}
	The following axiom schemas are equivalent:
	\begin{enumerate}
        \item[(i)] $\VP$.
		\item[(ii)] For every $n$, there is a $C^{(n)}$-extendible cardinal (cf. \cite[Corollary 4.15]{bag2012}).
		\item[(iii)] Every logic has a compactness number (cf. \cite[Theorem 2]{mak1985}).
	\end{enumerate}
	\end{fact}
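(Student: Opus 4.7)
The plan is to prove (i) and (ii) separately, in each case establishing both directions of the equivalence with $\VP$.

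For part (i), the key observation is that $\VP$ unfolds into the conjunction $\bigwedge_n \VP(\Pi_n)$, since any definable class of structures is $\Pi_n$-definable for some natural number $n$. The preceding fact of Bagaria provides, for each $n \geq 2$, the equivalence $\VP(\Pi_{n+1}) \Leftrightarrow$ ``there exists a $C^{(n)}$-extendible cardinal.'' Hence it suffices to check that the tails of both statements match and to absorb the low levels: $\VP(\Pi_1)$ and $\VP(\Pi_2)$ follow from the existence of an extendible (in particular, a $C^{(1)}$-extendible) cardinal, and conversely, since $C^{(n+1)} \subseteq C^{(n)}$, a class-sized supply of $C^{(n)}$-extendibles for every $n$ yields all levels of $\VP(\Pi_n)$.

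For part (ii), I would address the two directions separately. For $\VP \Rightarrow$ ``every logic has a compactness number,'' suppose for contradiction that some logic $\cL$ has no compactness number. Then for arbitrarily large cardinals $\kappa$ there exists a ${<}\kappa$-satisfiable yet unsatisfiable $\cL$-theory $T_\kappa$ in some language $\tau_\kappa$. Encode each tuple $(\tau_\kappa, T_\kappa, \text{coherent family of local models for fragments of } T_\kappa)$ as a single structure $\mathcal{M}_\kappa$ in a fixed meta-language; the collection $\cC = \{\mathcal{M}_\kappa : \kappa\}$ is definable from the data defining $\cL$. Applying $\VP$ to $\cC$ produces distinct $\mathcal{M}_\kappa, \mathcal{M}_{\kappa'}$ with an elementary embedding $e : \mathcal{M}_\kappa \to \mathcal{M}_{\kappa'}$; pushing the local models of $T_\kappa$ through $e$ (using that $e$ preserves the $\cL$-satisfaction relation as encoded in the meta-language) assembles a global model of $T_\kappa$, the desired contradiction. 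Conversely, for the backward direction, fix a definable class $\cC$ of $\tau$-structures for which $\VP$ allegedly fails. Construct an abstract logic $\cL_\cC$ extending first-order logic by a single generalized quantifier $Q_\cC$ semantically expressing membership (up to isomorphism) in $\cC$, and form a theory $T$ whose ${<}\kappa$-satisfiable fragments encode ``there is an $\mathcal{A} \in \cC$ of cardinality ${<}\kappa$ with no elementary embedding into me.'' The failure of $\VP$ for $\cC$ makes each fragment consistent while the full theory is unsatisfiable, producing a failure of compactness for $\cL_\cC$.

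The principal obstacle lies in two places. First, in the reverse direction of (ii), the logic $\cL_\cC$ must be an \emph{honest} abstract logic in the sense of Section \ref{sec:AMT}; in particular, $\cL_\cC[\tau]$ must be a set for each $\tau$, which forces one to introduce the class $\cC$ via a single generalized quantifier rather than class-many constant sentences, and to choose the semantics of $Q_\cC$ so that renamings and isomorphism invariance are preserved. Second, in the forward direction, the encoding $\kappa \mapsto \mathcal{M}_\kappa$ and the verification that $e$ genuinely transports satisfaction of $T_\kappa$ (and not merely of its syntactic shape) require one to design the meta-language with enough internal structure -- essentially a $\ZFC$-style encoding of $\models_\cL$ -- which is where the argument meets the abstract-logic framework most delicately. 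Neither issue is conceptually deep, but both must be handled with care to make the equivalence fully rigorous.
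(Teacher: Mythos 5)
The paper does not prove this statement: it is recorded as a \textbf{Fact} with citations (clause (i) is Bagaria's Corollary 4.15, clause (ii) is Makowsky's Theorem 2), together with a pointer to a more direct equivalence between (i) and (ii) in the companion paper. So the only thing to assess is whether your sketch would actually constitute a proof. Your treatment of (i) is fine and is exactly how the global statement falls out of the local fact recorded just above it (using that every definable class is $\Pi_n$-definable for some $n$ and that $C^{(n+1)}\subseteq C^{(n)}$). Your treatment of the backward direction of (ii) is, in outline, Makowsky's actual construction (a Lindstr\"om-style quantifier $Q_{\cC}$ for a rigid proper class $\cC$, plus the diagram of a large $\cA\in\cC$), and while the description of the fragments is garbled, it is repairable.

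The genuine gap is in the forward direction of (ii). After applying VP to the class of structures $\mathcal M_\kappa$ encoding counterexample theories, you obtain an elementary embedding $e:\mathcal M_\kappa\to\mathcal M_{\kappa'}$ between encodings of two \emph{different} theories $T_\kappa$ and $T_{\kappa'}$, and the step ``pushing the local models of $T_\kappa$ through $e$ assembles a global model of $T_\kappa$'' does not work: the $e$-images of the local models are models of small fragments of (a copy of) $T_{\kappa'}$ sitting inside $\mathcal M_{\kappa'}$, and there is no union or limit mechanism that turns set-many models of small fragments into a single model of the whole theory --- if there were, you would not need VP at all. Every known argument here is indirect: one first uses VP to extract a large cardinal (say $\kappa$ with embeddings $j:V_\alpha\to V_\beta$, $\crit(j)=\kappa$, $\alpha,\beta\in C^{(n)}$ where $n$ bounds the complexity of $\models_{\cL}$), and then proves compactness from the large cardinal by the standard reflection argument: for a ${<}\kappa$-satisfiable $T\in V_\alpha$, the set $j``T$ is a subtheory of $j(T)$ of size $<j(\kappa)$, so $V_\beta$ thinks it has a model, $\Sigma_n$-correctness of $V_\beta$ makes that a real model, and $j``T$ is a copy of $T$ under a renaming. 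This is precisely the route through clause (i) that the paper gestures at (Fact \ref{fact:BoneySortLogic} together with the observation that every logic is bounded by some $\bL^{s,n}_{\kappa,\omega}$), and your sketch needs to be rebuilt along those lines; as written, the central step of the forward direction is not an argument.
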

Note that \cite[Proposition 4.22]{boney2024model} gives a more direct equivalence between these clauses.
    
	Moreover, Bagaria showed that with rising $n$, the assertion that there is a $C^{(n)}$-extendible cardinal gains consistency strength (cf. \cite{bag2012}). The $C^{(n)}$-extendible cardinals thus form a \emph{pattern} (cf. \cite{bagaria2024patterns}) of stronger and stronger large cardinals below VP. The first author showed that this pattern is mirrored on the model-theoretic side:
	\begin{fact}[Boney {\cite[Proposition 4.12]{bon2020}}]\label{fact:BoneySortLogic}
		For every $n \geq 2$, a cardinal $\kappa$ is the compactness number of $\bL^{s,n}$ if and only if $\kappa$ is the smallest $C^{(n)}$-extendible cardinal.
	\end{fact}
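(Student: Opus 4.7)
My plan is to prove the biconditional one direction at a time and then match the minimal cardinals on each side. The central tool is the sentence $\Phi^{(n)} \in \bL^{s,n}$ axiomatizing $V_\alpha$ for $\alpha \in C^{(n)}$, together with Bagaria's original definition of $C^{(n)}$-extendibility (giving $j(\kappa) > \alpha$ and $j(\kappa) \in C^{(n)}$) and the equivalent reformulation restricting source and target to $C^{(n)}$.

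For the direction from extendibility to compactness, take a $<\kappa$-satisfiable $T \subseteq \bL^{s,n}[\tau]$, pick $\alpha \in C^{(n)}$ with $\alpha > \kappa + |T| + |\tau|$ and $T,\tau \in V_\alpha$, and apply $C^{(n)}$-extendibility to obtain $j : V_\alpha \to V_\beta$ elementary with $\crit(j) = \kappa$, $j(\kappa) > \alpha$, and $j(\kappa) \in C^{(n)}$. Satisfaction for $\bL^{s,n}$-sentences has definability complexity matching $\Sigma_n$, so the statement ``$T$ is $<\kappa$-satisfiable'' reflects from $V$ to $V_\alpha$; applying elementarity of $j$, the image $j(T) \in V_\beta$ is $<j(\kappa)$-satisfiable in $V_\beta$. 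The pointwise copy $j``T \subseteq j(T)$ has cardinality $|T| < \alpha < j(\kappa)$, so admits a model $\mathcal N \in V_\beta$. Reversing the renaming $j \rest \tau$ turns $\mathcal N$ into a $\tau$-model of $T$, and this model is absolute up to $V$ because $\beta \in C^{(n)}$.

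For the converse, assume $\kappa$ has the compactness property for $\bL^{s,n}$. Given $\alpha \in C^{(n)}$ with $\alpha > \kappa$, work in the vocabulary $\{\in, P\} \cup \{\bar a : a \in V_\alpha\}$ and let $T_\alpha$ consist of: $\Phi^{(n)}$; the full $\bL^{s,n}$-elementary diagram of $V_\alpha$ in the constants $\bar a$; axioms asserting that $P$ is transitive and is the exact image $\{\bar x : x \in V_\kappa\}$ (the latter pinned down via a single sort-quantifier sentence); and the sentence $\bar\kappa \neq \kappa$. By Mostowski uniqueness, the $P$-axioms force $\bar x = x$ for every $x \in V_\kappa$, so the interpretation of the constants in any model of $T_\alpha$ produces an elementary embedding of $V_\alpha$ into (by $\Phi^{(n)}$) some $V_\beta$ with $\beta \in C^{(n)}$, and $\bar\kappa \neq \kappa$ then forces the critical point to equal $\kappa$. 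To invoke compactness, I verify $<\kappa$-satisfiability of $T_\alpha$: any $<\kappa$-size fragment $T_0$ mentions only $<\kappa$ of the constants $\bar a$, and is realized inside $V_\alpha$ by taking the interpretation induced by a partial $\bL^{s,n}$-elementary map on a Skolem hull of the mentioned parameters that is the identity below $\kappa$ but sends $\kappa$ to a higher ordinal of the same type (available since $\alpha \in C^{(n)}$). Compactness then yields a full model of $T_\alpha$, delivering the embedding and hence $C^{(n)}$-extendibility via the Gitman reformulation.

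Combining the two directions shows $\comp(\bL^{s,n})$ is both bounded above by the least $C^{(n)}$-extendible and itself $C^{(n)}$-extendible, so the two cardinals coincide. The main obstacle is the converse: expressing the fix-point schema $\{\bar x = x : x \in V_\kappa\}$ inside finitary $\bL^{s,n}$ via the transitive predicate $P$ plus a sort-quantifier sentence pinning down its image, and certifying $<\kappa$-satisfiability of $T_\alpha$ via the Skolem-hull partial-automorphism construction inside $V_\alpha$ that moves $\kappa$ while preserving the diagram. The forward direction is cleaner but relies on the delicate point that $\bL^{s,n}$-satisfaction has definability complexity exactly $\Sigma_n$, so it reflects precisely to elements of $C^{(n)}$ --- this matching of complexities is the structural reason behind the $n$-indexed correspondence in the theorem.
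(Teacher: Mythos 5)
The paper does not prove this statement itself (it is quoted from \cite{bon2020}), so I am comparing your proposal against the standard Magidor-style argument that the cited result uses. Your forward direction is essentially that argument and is fine modulo one small point: choosing $\alpha \in C^{(n)}$ with $T,\tau \in V_\alpha$ does not by itself guarantee that $V_\alpha$ \emph{contains} models of all the $<\kappa$-sized fragments (witnesses may have rank $\geq \alpha$); you need to first pick models of the fragments and then choose $\alpha \in C^{(n)}$ above their ranks, so that $V_\alpha$ both contains them and, being $\Sigma_n$-correct, recognizes them as models. That is a routine fix.

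The converse direction, however, has a genuine gap, in two places. First, expressibility: you cannot pin down ``$P$ is exactly $\{\bar x : x \in V_\kappa\}$'' by a single sort-quantifier sentence (or by any set of $\bL^{s,n}$-sentences). The hard half is the inclusion ``every element of $P$ is the denotation of some constant $\bar x$'', which is an infinite disjunction over $\beth_\kappa$-many constants; sort quantifiers range over arbitrary sets but have no access to the syntax--semantics correspondence, so they cannot identify the image of a family of constant symbols. Without this, Mostowski collapsing does not force $\bar x = x$ on $V_\kappa$, and you lose $\crit(j) \geq \kappa$. Second, and more seriously, your $<\kappa$-satisfiability check is circular: a fragment containing $\bar\kappa \neq \kappa$ cannot be satisfied by the identity interpretation, so you invoke a partial $\bL^{s,n}$-elementary map fixing the mentioned parameters below $\kappa$ while moving $\kappa$ ``to a higher ordinal of the same type.'' Such a map does not exist in general --- if $\kappa$ is $\bL^{s,n}$-definable in $V_\alpha$ without parameters (which is exactly the situation at hand, since $\kappa$ is the compactness number), no elementary self-map of $V_\alpha$ can move it --- and its existence is not a consequence of $\alpha \in C^{(n)}$; it is essentially the reflection phenomenon you are trying to prove. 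The standard proof avoids both problems by \emph{not} forcing $\crit(j) = \kappa$ in the theory: one adds only a constant $d$ with $\bar\xi < d < \bar\kappa$ for $\xi < \kappa$ (trivially $<\kappa$-satisfiable by the identity with $d$ a large ordinal below $\kappa$), concludes that each resulting $j_\alpha : V_\alpha \to V_{\beta_\alpha}$ has \emph{some} critical point $\leq \kappa$, stabilizes the map $\alpha \mapsto \crit(j_\alpha)$ on a proper class (its range lies in the set $\kappa+1$; compare the Fodor argument in the paper's Theorem \ref{thm:ULSTSOL}) to obtain a single $C^{(n)}$-extendible $\mu \leq \kappa$, and then uses the forward direction plus minimality of the compactness number to force $\mu = \kappa$. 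You should restructure the converse along these lines.
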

	A cardinal $\kappa$ is \emph{huge with target $\lambda$} if there is an elementary embedding $j: V \rightarrow M$ such that $\crit(j) = \kappa$, $j(\kappa) = \lambda$, and $M^\lambda \subseteq M$.

	\section{Henkin-compactness, $\Pi_n$-strong cardinals, and weak Vopěnka's Principle}\label{sec:WVP}
	In \cite{adamek1988}, the authors considered \emph{weak Vopěnka's Principle} (WVP), an axiom arising from dualizing a category-theoretic formulation of VP. They showed that VP implies WVP, but whether the other implication holds was a long standing open question until recently Wilson answered it negatively (cf. \cite{wilson2020weak,wilson2022large}). Instead, Wilson showed that WVP is equivalent to Ord is Woodin, and thus weaker than, e.g., the existence of a Woodin cardinal, which in turn is much weaker than VP. Moreover, Bagaria and Wilson showed that the $\Pi_n$-strong cardinals form a pattern analogous to the pattern of $C^{(n)}$-extendible cardinals below VP. Let us write $\WVP(\Pi_n)$ for the statement of WVP restricted to $\Pi_n$ definable classes, as was considered in \cite[Definition 2.2]{bagaria2023weak}.
    \begin{fact}[Bagaria \& Wilson {\cite[Theorem 5.13]{bagaria2023weak}}]\label{fact:BagariaWilson0}
		The following are equivalent for $n\geq1$:
		\begin{enumerate}
			\item[(1)] $\WVP(\Pi_n)$.
			\item[(2)] There is a $\Pi_n$-strong cardinal.
		\end{enumerate}
	\end{fact}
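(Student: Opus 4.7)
The plan is to argue the equivalence in both directions using the correspondence between $\Pi_n$-strong embeddings of $V$ and elementary embeddings of structures in $\Pi_n$-definable classes, so that each side of the biconditional furnishes (or obstructs) the other.

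For $(2)\Rightarrow(1)$, I would fix a $\Pi_n$-strong cardinal $\kappa$ and a putative $\WVP(\Pi_n)$ counterexample: a $\Pi_n$-definable sequence $\vec A = \langle A_\alpha : \alpha \in \Ord\rangle$ of structures in a common language, carrying downward elementary embeddings $A_\beta \to A_\alpha$ for $\alpha \leq \beta$ but admitting no upward embedding $A_\alpha \to A_\beta$ for $\alpha < \beta$. Pick $\lambda > \kappa$ large with $A_\kappa \in V_\lambda$ and $\lambda \in C^{(n)}$, and apply $\Pi_n$-strongness to the $\Pi_n$-definable graph $A = \{(\alpha, A_\alpha) : \alpha \in \Ord\}$ to obtain $j : V \to M$ with $\crit(j) = \kappa$, $j(\kappa) > \lambda$, $V_\lambda \subseteq M$, and $A \cap V_\lambda \subseteq A^M$. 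The last containment forces $(\kappa, A_\kappa) \in A^M$, so $j(\vec A)_\kappa = A_\kappa$, and by elementarity $j(\vec A)_{j(\kappa)} = j(A_\kappa)$. Then $j \rest A_\kappa$ is an elementary embedding $A_\kappa \to j(A_\kappa)$ in $V$, and using $\lambda \in C^{(n)}$ together with $\Pi_n$-absoluteness between $V$ and $M$ for parameters in $V_\lambda$, one identifies $j(A_\kappa)$ with $A_\beta$ in $V$'s own enumeration, for some $\beta \geq j(\kappa) > \kappa$, producing the forbidden upward embedding.

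For $(1)\Rightarrow(2)$, I would argue contrapositively: assuming no $\Pi_n$-strong cardinal exists, associate to each ordinal $\alpha$ a canonical structure $B_\alpha$ encoding a minimal witness to the failure of $\Pi_n$-strongness at $\alpha$ -- for example, the least $\lambda_\alpha$ and a canonical $\Pi_n$-definable class $D_\alpha$ together obstructing any $\lambda_\alpha$-$\Pi_n$-strong embedding with critical point $\alpha$ -- together with canonical downward embeddings $B_\beta \to B_\alpha$ coming from truncation of the coded data to $\alpha$. Because failure of $\Pi_n$-strongness admits $\Pi_n$-definable witnesses, the sequence $\langle B_\alpha \rangle$ is $\Pi_n$-definable. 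An upward embedding $B_\alpha \to B_\beta$ for $\alpha < \beta$ would transport the failure witness at $\alpha$ into the data at $\beta$ in a fashion incompatible with the minimality used to build $B_\beta$, so the sequence admits none, refuting $\WVP(\Pi_n)$.

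The chief obstacle is the one-sided nature of the absoluteness in $(2)\Rightarrow(1)$: $\Pi_n$-strongness delivers only $A \cap V_\lambda \subseteq A^M$, so concluding that $j(A_\kappa)$ actually lies in $\cC$ in $V$, and not merely in $\cC^M$, needs extra work. The remedy is to insist on $\lambda \in C^{(n)}$, so that $V_\lambda \prec_{\Sigma_n} V$ and both $V$- and $M$-membership in $\cC$ reduce to $V_\lambda$-membership for parameters in $V_\lambda$; this is precisely the move that underlies the equivalence of the two standard formulations of $\Pi_n$-strongness and matches the usual practice of restricting attention to ordinals in $C^{(n)}$ in this setting. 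A secondary subtlety, on the $(1)\Rightarrow(2)$ side, is confirming that the assignment $\alpha \mapsto B_\alpha$ remains $\Pi_n$ rather than slipping into higher complexity, which requires careful bookkeeping of the quantifier alternations in the defining formula of $\Pi_n$-strongness.
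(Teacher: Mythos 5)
This statement is quoted verbatim from Bagaria and Wilson \cite[Theorem 5.13]{bagaria2023weak}; the paper offers no proof of it, so there is nothing internal to compare your argument against. Judged on its own terms, your sketch has two substantive gaps. First, you have imported the \emph{format} of VP rather than of WVP: weak Vopěnka's Principle is the dualized category-theoretic statement, asserting that there is no (definable) \emph{full} embedding of $\Ord^{\mathrm{op}}$ into the category of graphs, i.e.\ no sequence $\langle A_\alpha\rangle$ with \emph{exactly one} homomorphism $A_\beta\to A_\alpha$ for $\alpha\le\beta$ and none for $\alpha>\beta$. The morphisms are homomorphisms, not elementary embeddings, and the uniqueness (rigidity) clause is not decoration --- it is the engine of every known proof of the direction $(2)\Rightarrow(1)$. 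Your sketch drops it entirely and instead tries to manufacture an ``upward embedding $A_\kappa\to A_\beta$,'' which is the VP-style contradiction, not the WVP-style one.

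Second, the step you yourself flag as the chief obstacle does in fact fail, and the proposed remedy does not repair it. From $A\cap V_\lambda\subseteq A^M$ you may conclude $A^M(\kappa)=A_\kappa$ (the index $\kappa$ and the structure $A_\kappa$ lie in $V_\lambda$), but $j(A_\kappa)=A^M(j(\kappa))$ sits at index $j(\kappa)>\lambda$, entirely outside the window on which $A$ and $A^M$ are known to agree; taking $\lambda\in C^{(n)}$ controls the complexity of the definition below $\lambda$ but cannot identify $M$'s sequence at an index above $\lambda$ with $V$'s. The standard argument avoids this by never claiming $j(A_\kappa)$ is on the $V$-sequence: one uses the homomorphism $j\rest A_\kappa:A_\kappa\to j(A_\kappa)$ together with the unique downward homomorphism $j(A_\kappa)\to A^M(\kappa)=A_\kappa$ supplied by elementarity of $j$, and derives the contradiction from rigidity/fullness. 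Your direction $(1)\Rightarrow(2)$ is likewise only a declaration of intent: the claims that failure of $\Pi_n$-strongness ``admits $\Pi_n$-definable witnesses'' and that an upward homomorphism would be ``incompatible with minimality'' are precisely the content of Bagaria--Wilson's construction (which builds specific relational structures whose homomorphisms code extender embeddings), and nothing in the sketch substitutes for it. As written, neither direction goes through.
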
 
	\begin{fact}[Bagaria \& Wilson \cite{bagaria2023weak}]\label{fact:BagariaWilson}
		The following are equivalent:
		\begin{enumerate}
			\item[(1)] WVP.
			\item[(2)] For every $n$, there is a $\Pi_n$-strong cardinal.
			\item[(3)] For every $n$, there is a proper class of $\Pi_n$-strong cardinals.
            \item[(4)] Ord is Woodin.
		\end{enumerate}
	\end{fact}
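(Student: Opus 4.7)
The plan is to prove the cycle $(1) \Leftrightarrow (2) \Leftrightarrow (3) \Leftrightarrow (4)$ by leveraging Fact \ref{fact:BagariaWilson0} at each level $n$ and bridging to ``Ord is Woodin'' via universal $\Pi_n$-classes. The equivalence $(1) \Leftrightarrow (2)$ is immediate, since WVP is by definition the schema $\bigwedge_n \WVP(\Pi_n)$, which Fact \ref{fact:BagariaWilson0} equates level by level with the existence of $\Pi_n$-strong cardinals. The implication $(3) \Rightarrow (2)$ is trivial.

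The main work is the equivalence $(2) \Leftrightarrow (4)$. For $(4) \Rightarrow (2)$, fix $n$ and form a universal $\Pi_n$-class $U$ from a $\Pi_n$-truth predicate, so that every $\Pi_n$-definable class $A$ appears as a section $U_\varphi$ of $U$. Applying ``Ord is Woodin'' to $U$ yields an $U$-strong cardinal $\kappa$; any embedding $j:V\to M$ satisfying $U \cap V_\lambda = U^M \cap V_\lambda$ simultaneously reflects every $\Pi_n$-definable class, making $\kappa$ a $\Pi_n$-strong cardinal. For $(2) \Rightarrow (4)$, let $A$ be a definable class (with parameter $p$), choose $n$ large enough that both $A$ and $A^c$ are $\Pi_n$-definable (absorbing $p$ by passing to a parameter-free coding and taking $\kappa > \text{rank}(p)$ so that $p$ is fixed by any relevant embedding), and pick a $\Pi_{n+1}$-strong cardinal $\kappa$. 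The decisive trick is to encode the pair $(A, A^c)$ as a single $\Pi_{n+1}$-class $C := (\{0\} \times A) \cup (\{1\} \times A^c)$, so that a single embedding $j:V \to M$ witnessing $C \cap V_\lambda \subseteq C^M$ yields both $A \cap V_\lambda \subseteq A^M$ and $A^c \cap V_\lambda \subseteq (A^c)^M$, and hence the equality $A \cap V_\lambda = A^M \cap V_\lambda$ required for $A$-strongness.

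For upgrading $(2)$ to $(3)$, I would use reflection via ``Ord is Woodin'' (now available from the previous step): for fixed $n$ and ordinal $\mu$, apply $(4)$ to a $\Pi_n$-definable class $A_\mu$ engineered so that preservation of $A_\mu$ forces $\crit(j) > \mu$ --- for instance, let $A_\mu$ encode $U$ restricted to the cone of objects of rank above $\mu$. Iterating over $\mu$ yields a proper class of $\Pi_n$-strong cardinals. The main obstacle I anticipate is the bookkeeping with parameters and complexity levels --- particularly the claim that $\Pi_{n+1}$-strongness genuinely produces a \emph{single} embedding simultaneously witnessing the required absoluteness for a $\Pi_n$-class and its $\Sigma_n$-complement, while respecting the ``without parameters'' clause in the definition of $\Pi_n$-strong.
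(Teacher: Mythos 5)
The first thing to say is that the paper does not prove this statement at all: it is imported as a black-box Fact from Bagaria and Wilson \cite{bagaria2023weak}, so there is no internal argument to compare yours against. Judged on its own, your outline correctly identifies the two standard devices for $(2)\Leftrightarrow(4)$: a universal $\Pi_n$-class built from a $\Pi_n$-satisfaction predicate for $(4)\Rightarrow(2)$, and the coding of a class together with its complement into a single class of one higher complexity, which upgrades the one-sided inclusion $A\cap V_\lambda\subseteq A^M$ in the definition of $\Pi_{n+1}$-strength to the equality $A\cap V_\lambda=A^M\cap V_\lambda$ required for $A$-strength. The steps $(1)\Leftrightarrow(2)$ (via Fact \ref{fact:BagariaWilson0} and the observation that every definable class is $\Pi_n$ for some $n$) and $(3)\Rightarrow(2)$ are unproblematic.

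Two of your steps have genuine gaps. First, the mechanism you propose for $(3)$ does not work: preserving a class ``restricted to the cone of objects of rank above $\mu$'' places no lower bound on $\crit(j)$, since an embedding with small critical point can perfectly well compute a class supported at high ranks correctly up to $V_\lambda$. The standard device is different: fold into the class a definable club class $C\subseteq(\mu,\Ord)$ and prove separately that every $C$-strong cardinal $\kappa$ lies in $C$ --- if $C\cap\kappa$ were bounded by some $\beta<\kappa$, then by elementarity $C^M\cap j(\kappa)=j(C\cap\kappa)\subseteq\beta+1$, contradicting $C^M\cap V_\lambda=C\cap V_\lambda\not\subseteq\beta+1$; hence $\kappa$ is a limit point of $C$ and, by closedness, $\kappa\in C>\mu$. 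Second, your parameter absorption in $(2)\Rightarrow(4)$ is circular as written: from $(2)$ alone you get \emph{some} $\Pi_m$-strong cardinal, not one above $\text{rank}(p)$, and arranging $\kappa>\text{rank}(p)$ for arbitrary $p$ is essentially clause $(3)$, which you only derive afterwards. The repair is that you do not need $j(p)=p$ at all: replace $A=\{x:\varphi(x,p)\}$ by the parameter-free class $B=\{(x,q):\varphi(x,q)\}$, apply $\Pi_m$-strength together with the complement trick to get $B\cap V_\lambda=B^M\cap V_\lambda$ for limit $\lambda>\text{rank}(p)$, and read off $A\cap V_\lambda=A^M\cap V_\lambda$ as the $p$-section of both sides, using that $p\in V_\lambda\subseteq M$ and that $A^M$ is computed in $M$ from the same parameter $p$. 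With these two repairs your outline does recover the Bagaria--Wilson result.
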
 
	Given VP's connections to model theory, this left open whether WVP similarly has characterizations by model-theoretic properties of strong logics. We will show that this is indeed the case.\footnote{Note that independently Holy, Lücke, and Müller \cite[Theorem 1.8(2)]{holy2024} also provide a positive answer to this question by entirely different methods than the ones used below, considering what they call \emph{outward compactness cardinals}. Note that they provide a characterization of the global schema Ord is Woodin, without going through local forms in terms of $\Pi_n$-strong cardinals. In contrast to our results, they thus leave open whether the hierarchy of $\Pi_n$-strong cardinals can be characterized by outward compactness properties. A third way of characterizing both $\Pi_n$-strong cardinals and WVP by model-theoretic properties is due to the second author and Wilson (cf. \cite[Section 5.4]{osinski2024}), using downward Löwenheim-Skolem properties of class-sized versions of sort logic.} More precisely, we will show that the exact pattern of $\Pi_n$-strong cardinals below WVP corresponds to certain compactness properties of sort logic. The relation of model theory, WVP, and $\Pi_n$-strong cardinals is thus in some sense analogous to the relation of model theory, VP, and $C^{(n)}$-extendible cardinals described by the first author's Theorem \ref{fact:BoneySortLogic} and the results of Section \ref{sec:ULST} below. We will consider compactness properties related to \emph{Henkin models for an abstract logic}, which were introduced in \cite{boney2024model}.
	
	Recall that, classically, a Henkin model for a sentence $\varphi$ of second-order logic consists of a structure $\mathcal A$ and a subset $P \subseteq \mathcal P(A)$ such that $(\mathcal A, P)$ computes to be a model of $\varphi$ when letting the second-order quantifiers in $\varphi$ range over $P$, as opposed to the full powerset of $A$. Note that if $(M, \in)$ is some transitive model of set theory such that 
	\[
	(M, \in) \models ``\mathcal A \models_{\bL^2} \varphi",
	\]
	then $(\mathcal A, \mathcal P^M(A))$ is a Henkin model of $\varphi$. In \cite{boney2024model}, this observation inspired the authors to introduce the notion of a \textit{Henkin model for an abstract logic}. The following is a slightly less general version of this notion.
	\begin{definition}
		Let $\mathcal L$ be a logic and $T \subseteq \mathcal L[\tau]$ a theory. A pair $\mathcal M = (M, \cA)$ is an \emph{$\mathcal L$-Henkin model of $T$} iff $M$ includes all possible parameters used in the definition of $\cL$ and
		\begin{enumerate}
			\item[(1)] $(M, \in)$ is a transitive model of some large finite fragment of ZFC, and
			\item[(2)] $\mathcal A \in M$ and there is a renaming of $\tau$ that induces a copy $T^* \subset \cM$ of $T$ such that $\cA$ is a $\sigma$-structure for some $\sigma \supseteq \tau(T^*)$, and for every $\varphi \in T^*$, we have $\mathcal M \models `` \mathcal A \models_{\mathcal L} \varphi"$. 
		\end{enumerate}
	\end{definition}
	
	As in \cite{bon2020}, we say that an $\mathcal L$-Henkin model $\mathcal M$ is \textit{full up to rank $\lambda$} for some ordinal $\lambda$ if $V_\lambda \subseteq M$ and it is \textit{$n$-correct for a set $A$} iff for every $\Sigma_n$-formula $\Phi(x)$ and every $a \in M \cap A$, we have $\mathcal M \models \Phi(a)$ iff $V \models \Phi(a)$. For every natural number $n$, fix a finite fragment $\ZFC^*_n$ which is large enough to show that $\Phi^*$ axiomatizes the class of models isomorphic to some $V_\alpha$ and that $\Phi^{(n)}$ axiomatizes the class of models isomorphic to some $V_\alpha$ with $\alpha \in C^{(n)}$. With this we show the following theorem:
	
	\begin{theorem}\label{thm:PinStrong}
		The following are equivalent for every cardinal $\kappa$ and $n \geq 2$:
		\begin{enumerate}
			\item[(1)] $\kappa$ is $\Pi_n$-strong
			\item[(2)] For every $\lambda \in C^{(n)}$ and every theory $T \subseteq \bL_{\kappa, \omega}^{s, n}$ that can be written as an increasing union $T = \bigcup_{\alpha < \kappa} T_\alpha$ of theories $T_\alpha$ that each have models of size $\geq \kappa$, there is an $\bL_{\kappa, \omega}^{s, n}$-Henkin model $\mathcal M = (M, \mathcal A)$ of $T$ such that
			\begin{enumerate}
				\item $\mathcal M \models \ZFC^*_n$,
				\item $|A| \geq \lambda$, and
				\item $\mathcal M$ is full up to $V_\lambda$ and $n$-correct for $V_\lambda$.
			\end{enumerate}
		\end{enumerate}
		\begin{proof}
            Both clauses are clearly false for $\kappa = \omega$, so we can assume $\kappa$ is uncountable (this allows us to take countable conjunctions below).
        
			For the forward direction, note that it sufficient to show (2) for arbitrarily large $\lambda$, so we can assume we are given $\lambda$ which is a limit point of $C^{(n)}$. Suppose we have a setup like in (2), i.e., a theory $T \subseteq \bL_{\kappa, \omega}^{s,n}$ and an increasing union $\bigcup_{\alpha < \kappa} T_\alpha = T$ such that every $T_\alpha$ has a model of size $\geq \kappa$. Then we can pick a function $f$ on $\kappa$ such that every $f(\alpha) = (V_{\beta_\alpha}, \mathcal A_\alpha)$ is an $\bL^{s,n}_{\kappa, \omega}$-Henkin model of $T_\alpha$. Without loss of generality we can chose $\kappa \leq \beta_\alpha \in C^{(n)}$ such that $V_{\beta_\alpha} \models \ZFC^*_n$. 
			
			Now as $\kappa$ is $\lambda$-$\Pi_n$-strong and $\lambda$ a limit of $C^{(n)}$, by \cite[Proposition 5.9]{bagaria2023weak} let $j: V \rightarrow N$ be elementary such that $\crit(j) = \kappa$, $j(\kappa) > \lambda$, $V_\lambda \subseteq N$ and $N \models \lambda \in C^{(n)}$. Then $j(f)$ is a function on $j(\kappa)$ such that, in $N$, $j(f)(\alpha)$ is an $\bL^{s,n}_{j(\kappa), \omega}$-Henkin model of $T_\alpha^*$, where $j(T) = \bigcup_{\alpha < j(\kappa)} T_\alpha^*$. Thus, in $N$, we have $j(f)(\kappa) = (M, \mathcal A)$ and is an $\bL^{s,n}_{j(\kappa), \omega}$-Henkin model of $T_\kappa^* \supseteq \bigcup_{\alpha < \kappa} T_\alpha^* = \bigcup_{\alpha < \kappa} j(T_\alpha) \supseteq j``T$. Given standard syntax of sort logic (cf., e.g., \cite{vää2014}) it is easy to see that $j``T \subseteq \bL^{s,n}_{\kappa, \omega}$ is a copy of $T$. Thus, from the outside, we see that $(M, \mathcal A)$ is an $\bL_{\kappa, \omega}^{s,n}$-Henkin model of $T$. 
			
			As all $f(\alpha)$ are full up to rank $\kappa$, $N$ believes that $j(f)(\kappa)$ is full up to rank $j(\kappa) > \lambda$. $N$ might be incorrect about this, but as $V_\lambda \subseteq N$ we at least have that $j(f)(\kappa)$ is full up to rank $\lambda$. To show that $j(f)(\kappa)$ is $n$-correct for $V_\lambda$ let $a \in V_\lambda$ and let $\Phi(x)$ be $\Sigma_n$. Then 
			\begin{equation*}
				\begin{split}
					V \models \Phi(a) & \Leftrightarrow V_\lambda \models \Phi(a) \\
					& \Leftrightarrow N \models \Phi(a) \\
					& \Leftrightarrow N \models ``M \models \Phi(a)" \\
					& \Leftrightarrow M \models \Phi(a).
				\end{split}
			\end{equation*}
			The first equivalence holds because $\lambda \in C^{(n)}$, the second one because $N \models \lambda \in C^{(n)}$, the third because, as all the $\beta_\alpha \in C^{(n)}$, $N \models ``M = V_\beta \text{ for some } \beta \in C^{(n)}"$, and the last by absoluteness of first-order satisfaction. So $j(f)(\kappa)$ is an $\bL^{s,n}_{\kappa, \omega}$-Henkin model of $j``T$ with our desired properties.
			
			\medskip For the backward direction, suppose we have the compactness property from (2). By Lemma \ref{lem:localPinStrong} below, given $\lambda = \beth_\lambda$ a limit of $C^{(n)}$, it is sufficient to provide an elementary embedding to a well-founded model $j: V_{\kappa + 1} \rightarrow M$ with $\crit(j) = \kappa$, $j(\kappa) \geq \lambda$, $V_\lambda \subseteq M$ and $C^{(n)} \cap V_\lambda = j(C^{(n)} \cap V_\kappa) \cap V_\lambda$. Consider the following theory: 
			\begin{equation*}
				\begin{split}
					T = & \text{ED}_{\bL_{\kappa, \omega}}(V_{\kappa + 1}, \in) \cup \{c_i < c < c_\kappa \colon i < \kappa\} \cup \{\Phi\} \cup \\
					& \{\forall x(x \in c_{C^{(n)} \cap V_\kappa} \rightarrow (\Phi^{(n)})^{\{y\colon y \in V_x\}} \} \cup
					\{\forall x(( \Phi^{(n)})^{\{y \colon y \in V_x\}} \wedge x \in c_{V_\lambda} \rightarrow x \in c_{C^{(n)} \cap V_\kappa} \},
				\end{split}
			\end{equation*}
			where $(\Phi^{(n)})^{\{y \colon y \in x\}}$ is the relativization of $\Phi^{(n)}$ to the substructure which consists of the elements the structure believes to be a member of $V_x$, i.e., a formula in $\bL^{s, n}$ coding that $x \in C^{(n)}$. Clearly, this theory can be written as an increasing union of satisfiable theories $T_\alpha$ for $\alpha < \kappa$ by considering those bits of $T$ that include only the sentences $c_i < c < c_\kappa$ for $i < \alpha$ and using $V_{\kappa + 1}$ as a model. Then by (2), $T$ has an $\bL^{s,n}_{\kappa, \omega}$-Henkin model $(M, N)$ satisfying $\ZFC^*_n$ which is full up to rank $\lambda$, $n$-correct for $V_\lambda$, and with $|N| \geq \lambda$. Because $M$ believes that $N$ satisfies $\Phi$, $N = V_\beta^M$ for some $\beta$. In particular, $N$ is well-founded. From the outside we see that $N$ satisfies a copy of the $\bL_{\kappa ,\omega}$-elementary diagram of $V_{\kappa + 1}$, so there is an elementary embedding $j: V_{\kappa + 1} \rightarrow N$. Since $\bL_{\kappa, \omega}$ can define all ordinals $< \kappa$, it follows that $\crit(j) \geq \kappa$. By the sentences $c_i < c < c_\kappa$, we get $\crit(j) \leq \kappa$, and so $\crit(j) = \kappa$. Because $\lambda$ is a $\beth$-fixed point and $|V_\beta^M|\geq \lambda$, we have to have $\beta \geq \lambda$. Because $V_\lambda \subseteq M$, therefore $V_\lambda \subseteq V_\beta^M$. Note that $\kappa$ is the largest cardinal of $V_{\kappa + 1}$, so $j(\kappa)$ is the largest $M$-cardinal. In particular, $j(\kappa) \geq \lambda$.

			Finally, for $\alpha < \lambda$, we have
			\[
			M \models ``N \models \alpha \in c_{C^{(n)} \cap V_\lambda}" \text{ iff } M \models `` N \models (\wedge \Phi^{(n)})^{\{y \colon y \in V_\alpha\}}".
			\]
			The first part means that $\alpha \in j(C^{(n)} \cap V_\lambda)$, while the second is equivalent to $M \models \alpha \in C^{(n)}$. Because $M$ is $n$-correct for $V_\lambda$ and $C^{(n)}$ is $\Pi_n$ definable, for $\alpha < \lambda$ the latter actually means $\alpha \in C^{(n)}$. Thus $\alpha \in j(C^{(n)} \cap V_\lambda) \cap V_\lambda$ iff $\alpha \in C^{(n)} \cap V_\lambda$. 
			
		\end{proof}
	\end{theorem}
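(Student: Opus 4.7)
The plan is to prove each direction by translating between elementary embeddings witnessing $\Pi_n$-strength and $\bL^{s,n}_{\kappa,\omega}$-Henkin models. For $(1)\Rightarrow(2)$, the strategy is to fix a $\kappa$-sequence of local Henkin models of the $T_\alpha$ and apply a $\Pi_n$-strong embedding to reflect it past $\kappa$; for $(2)\Rightarrow(1)$, one codes an elementary diagram of $V_{\kappa+1}$ together with sort-logic predicates pinning down $C^{(n)}$, and decodes a Henkin model of the resulting theory to extract the required embedding.

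For the forward direction, fix $\lambda\in C^{(n)}$, which we may take to be a limit of $C^{(n)}$. For each $\alpha<\kappa$, pick $\beta_\alpha\in C^{(n)}$ with $\beta_\alpha\geq\kappa$ and $V_{\beta_\alpha}\models\ZFC^*_n$ containing a model $\cA_\alpha$ of $T_\alpha$ of size $\geq\kappa$, and set $f(\alpha)=(V_{\beta_\alpha},\cA_\alpha)$, an $\bL^{s,n}_{\kappa,\omega}$-Henkin model of $T_\alpha$. Then invoke \cite[Proposition 5.9]{bagaria2023weak} to obtain a $\Pi_n$-strong embedding $j\colon V\to N$ with $\crit(j)=\kappa$, $j(\kappa)>\lambda$, $V_\lambda\subseteq N$, and $\lambda\in(C^{(n)})^N$. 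By elementarity $N$ sees $j(f)(\kappa)=(M,\cA)$ as a Henkin model of $(j(T))_\kappa$, and this set of sentences contains $j``T$, itself a copy of $T$ via the renaming induced by $j$. Fullness of $M$ up to $V_\lambda$ follows from $V_\lambda\subseteq N$ and $j(\kappa)>\lambda$, while $n$-correctness of $M$ for $V_\lambda$ comes from $\lambda\in C^{(n)}$ in both $V$ and $N$ together with $M=V_\beta^N$ for some $\beta\in(C^{(n)})^N$, via absoluteness of first-order satisfaction.

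For the backward direction, given $\lambda=\beth_\lambda$ a limit of $C^{(n)}$, the goal is to produce an elementary $j\colon V_{\kappa+1}\to N$ with $\crit(j)=\kappa$, $j(\kappa)\geq\lambda$, $V_\lambda\subseteq N$, and $C^{(n)}\cap V_\lambda=j(C^{(n)}\cap V_\kappa)\cap V_\lambda$; Lemma \ref{lem:localPinStrong} will then upgrade this to $\lambda$-$\Pi_n$-strength. The theory I would feed into (2) combines: the $\bL_{\kappa,\omega}$-elementary diagram of $V_{\kappa+1}$; a fresh constant $c$ with axioms $c_i<c<c_\kappa$ for $i<\kappa$ forcing a new critical point; Magidor's sentence $\Phi$ forcing the universe to be some $V_\alpha$, hence well-founded; and two sort-logic schemes using the $\bL^{s,n}$-relativization $(\Phi^{(n)})^{\{y:y\in V_x\}}$ to express that elements of the constant $c_{C^{(n)}\cap V_\kappa}$ are exactly the $C^{(n)}$-points below $c_{V_\lambda}$. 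Each initial segment is satisfied by $V_{\kappa+1}$ itself (truncating the $c_i<c$ axioms at $\alpha$), so (2) yields a Henkin model $(M,N)$ with $|N|\geq\lambda$, full up to $V_\lambda$, and $n$-correct for $V_\lambda$. Decoding $\Phi$ gives $N\cong V_\beta^M$ with $\beta\geq\lambda$ (using $\lambda=\beth_\lambda$), and the elementary diagram then yields the required $j\colon V_{\kappa+1}\to N$ with $\crit(j)=\kappa$ and $j(\kappa)\geq\lambda$.

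The main obstacle I anticipate is the $C^{(n)}$-agreement clause: the two sort-logic schemes give the two inclusions inside $M$, but the identity $C^{(n)}\cap V_\lambda=j(C^{(n)}\cap V_\kappa)\cap V_\lambda$ must hold in $V$. Bridging the gap requires the $n$-correctness of $M$ for $V_\lambda$ together with the $\Pi_n$-definability of $C^{(n)}$, so that membership in $C^{(n)}$ from $M$'s perspective agrees with the true predicate on $V_\lambda$. This interplay between the internal coding of $C^{(n)}$ via $\Phi^{(n)}\in\bL^{s,n}$ and the externally-guaranteed $n$-correctness of the Henkin model is precisely what forces the use of $\bL^{s,n}$ rather than a weaker logic.
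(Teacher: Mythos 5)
Your proposal is correct and follows essentially the same route as the paper's proof: the forward direction reflects a $\kappa$-sequence of local Henkin models $(V_{\beta_\alpha},\cA_\alpha)$ via the embedding from \cite[Proposition 5.9]{bagaria2023weak}, and the backward direction feeds the $\bL_{\kappa,\omega}$-elementary diagram of $V_{\kappa+1}$ together with Magidor's $\Phi$ and the $\Phi^{(n)}$-relativization scheme into (2), then applies Lemma \ref{lem:localPinStrong}. The point you flag as the main obstacle --- transferring the internal $C^{(n)}$-agreement to $V$ via $n$-correctness and the $\Pi_n$-definability of $C^{(n)}$ --- is resolved exactly as in the paper.
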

	
	\begin{lemma}\label{lem:localPinStrong}
		If for arbitrarily large limits $\lambda$ of $C^{(n)}$, there are elementary embeddings $j: V_{\kappa + 1} \rightarrow M$ for a transitive $M$, where $\crit(j) = \kappa$, $j(\kappa) \geq \lambda$, $V_\lambda \subseteq M$ and $j(C^{(n)} \cap V_\kappa) \cap V_\lambda = C^{(n)} \cap V_\lambda$, then $\kappa$ is $\Pi_n$-strong.
		\begin{proof}
			Let $\lambda$ be a limit point of $C^{(n)}$. Note that it suffices to show that $\kappa$ is $\lambda$-$\Pi_n$-strong. Note further that analogously to the case for strong cardinals (cf., e.g., \cite[Exercise 26.7(b)]{kan}), the condition that $j(\kappa) > \lambda$ is superfluous for the global notion of being $\Pi_n$-strong. Thus, by \cite[Proposition 5.9]{bagaria2023weak}, it is sufficient to find an elementary embedding $j: V \rightarrow N$ such that $\crit(j) = \kappa$, $V_\lambda \subseteq N$, and $N \models \lambda \in C^{(n)}$. Our assumption provides an elementary embedding $j: V_{\kappa + 1} \rightarrow M$ where $\crit(j) = \kappa$, $j(\kappa) \geq \lambda$, $V_\lambda \subseteq M$ and $j(C^{(n)} \cap V_\kappa) \cap V_\lambda = C^{(n)} \cap V_\lambda$. We may derive a $(\kappa, \lambda)$-extender $E$ from $j$ in the standard way by letting for $a \in [\lambda]^{< \omega}$ and $X \subseteq [\kappa]^{< \omega}$:
			\[
			X \in E_a \text{ iff } a \in j(X).
			\]
			Proceed to build the extender power $m_E$ of $V_{\kappa + 1}$ by $E$. Standard arguments (analogous to those in \cite[Section 26]{kan}) imply that this comes with elementary maps $i_E: V_{\kappa + 1} \rightarrow m_E$ and $k_E: m_E \rightarrow M$ such that $\crit(i_E) = \kappa$, $i_E(\kappa) \geq \lambda$, $V_\lambda \subseteq m_E$, $j = k_E \circ i_E$, and $k_E \upharpoonright \lambda = \id$. Further, $m_E$ is an initial segment of the extender power $M_E$ of $V$ and the standard elementary map $j_E: V \rightarrow M$ restricts to $i_E$. In particular, $\crit(j_E) = \kappa$ and $V_\lambda \subseteq M_E$. It is sufficient to show that $j_E(C^{(n)} \cap V_\kappa) \cap V_\lambda = C^{(n)} \cap V_\lambda$, as then $M_E$ will see that $C^{(n)}$ is unbounded below $\lambda$, and therefore that $\lambda \in C^{(n)}$. Because $j_E$ restricts to $i_E$, it is thus sufficient to show the following claim.
			\begin{claim} 
				$i_E(C^{(n)}\cap V_\kappa) \cap V_\lambda =  C^{(n)} \cap V_\lambda$.
			\end{claim}
			By elementarity and because $k_E \circ i_E = j$, we get that
			\[
			\alpha \in i_E(C^{(n)} \cap V_\kappa) \text{ iff } k_E(\alpha) \in k_E(i_E(C^{(n)} \cap V_\kappa)) = j(C^{(n)} \cap V_\kappa).
			\]
			Now $k_E \upharpoonright \lambda = \id$, so if $\alpha < \lambda$, then $k_E(\alpha) = \alpha$. Together we have:
			\[
			\alpha \in i_E(C^{(n)} \cap V_\kappa) \cap V_\lambda \text{ iff } \alpha \in j(C^{(n)} \cap V_\kappa) \cap V_\lambda = C^{(n)} \cap V_\lambda.
			\]
		\end{proof}  
	\end{lemma}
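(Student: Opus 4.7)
The plan is to invoke the equivalent characterization of $\lambda$-$\Pi_n$-strongness recalled in the excerpt (from \cite[Proposition 5.9]{bagaria2023weak}): it suffices, for arbitrarily large limits $\lambda$ of $C^{(n)}$, to exhibit an elementary embedding $j_E \colon V \rightarrow M_E$ with $\crit(j_E) = \kappa$, $V_\lambda \subseteq M_E$, and $M_E \models \lambda \in C^{(n)}$ (the hypothesis $j_E(\kappa) > \lambda$ being superfluous for the global notion, analogously to the strong cardinal case). The only input at our disposal is a partial embedding $j \colon V_{\kappa + 1} \to M$, so the natural strategy is to globalize $j$ via its derived extender.

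Concretely, I would fix a limit $\lambda$ of $C^{(n)}$ and apply the hypothesis to obtain $j \colon V_{\kappa + 1} \to M$ as stated, then form the $(\kappa, \lambda)$-extender $E$ by setting $X \in E_a$ iff $a \in j(X)$, for $a \in [\lambda]^{<\omega}$ and $X \subseteq [\kappa]^{<\omega}$. Taking the extender power of $V$ yields $j_E \colon V \to M_E$; taking the extender power of $V_{\kappa+1}$ yields $i_E \colon V_{\kappa+1} \to m_E$ with $m_E$ sitting as an initial segment of $M_E$ and $j_E \rest V_{\kappa+1} = i_E$. Standard extender theory then provides a factor map $k_E \colon m_E \to M$ with $k_E \circ i_E = j$ and $k_E \rest \lambda = \id$, together with $\crit(j_E) = \kappa$ and $V_\lambda \subseteq M_E$.

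The substantive step is to verify $M_E \models \lambda \in C^{(n)}$, and this reduces to the claim that $j_E(C^{(n)} \cap V_\kappa) \cap V_\lambda = C^{(n)} \cap V_\lambda$. Granted the claim, $(C^{(n)})^{M_E}$ agrees with the true $C^{(n)}$ below $\lambda$, so $M_E$ sees $\lambda$ as a limit of $(C^{(n)})^{M_E}$, and closure of $C^{(n)}$ inside $M_E$ yields $M_E \models \lambda \in C^{(n)}$. The claim itself is where the identity-on-$\lambda$ behavior of the factor map pays off: for $\alpha < \lambda$, elementarity of $k_E$ combined with $j = k_E \circ i_E$ shows $\alpha \in i_E(C^{(n)} \cap V_\kappa)$ iff $k_E(\alpha) \in j(C^{(n)} \cap V_\kappa)$; since $k_E \rest \lambda = \id$, this reduces to $\alpha \in j(C^{(n)} \cap V_\kappa) \cap V_\lambda$, which by the hypothesis on $j$ equals $C^{(n)} \cap V_\lambda$.

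The main obstacle I anticipate is organizational bookkeeping around the extender construction: ensuring that $k_E \rest \lambda = \id$, that $m_E$ genuinely embeds as an initial segment of $M_E$ so the identification $j_E \rest V_{\kappa + 1} = i_E$ is legitimate, and that $V_\lambda \subseteq M_E$ (which should follow from $V_\lambda \subseteq M$ together with the fact that the $[\lambda]^{<\omega}$-seeds suffice to capture all sets of rank below $\lambda$). These are all standard in extender theory but must be handled carefully enough that the reduction step above is unambiguous; once they are in place, the transfer of the hypothesis through $k_E$ is a short calculation.
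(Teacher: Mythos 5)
Your proposal is correct and follows essentially the same route as the paper's own proof: reduce to the Bagaria--Wilson characterization, derive the $(\kappa,\lambda)$-extender from the given $j\colon V_{\kappa+1}\to M$, pass to the extender power $j_E\colon V\to M_E$ with $m_E$ an initial segment, and use $k_E\rest\lambda=\id$ together with $k_E\circ i_E=j$ to transfer the agreement of $j(C^{(n)}\cap V_\kappa)$ with $C^{(n)}$ below $\lambda$. The ``bookkeeping'' points you flag are exactly the standard extender facts the paper also cites without proof.
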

    It is easy to adapt the proof of Theorem \ref{thm:PinStrong} to characterize the smallest $\Pi_n$-strong cardinal (as was explicitly carried out in \cite[Theorem 2.3.7]{osinski2024}). Then we get:
    \begin{theorem}\label{thm:HCCsort}
        The following are equivalent for every cardinal $\kappa$ and $n \geq 2$:
        \begin{enumerate}
			\item[(1)] $\kappa$ is the smallest $\Pi_n$-strong cardinal. 
			\item[(2)] $\kappa$ is the smallest cardinal such that for every $\lambda \in C^{(n)}$ and every theory $T \subseteq \bL^{s, n}$ that can be written as an increasing union $T = \bigcup_{\alpha < \kappa} T_\alpha$ of theories $T_\alpha$ that each have models of size $\geq \kappa$, there is an $\bL^{s, n}$-Henkin model $\mathcal M = (M, \mathcal A)$ of $T$ such that
			\begin{enumerate}
				\item $\mathcal M \models \ZFC^*_n$,
				\item $|A| \geq \lambda$, and
				\item $\mathcal M$ is full up to $V_\lambda$ and $n$-correct for $V_\lambda$.
			\end{enumerate}
		\end{enumerate}
    \end{theorem}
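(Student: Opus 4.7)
The plan is to adapt the proof of Theorem \ref{thm:PinStrong} essentially verbatim, using a comparison-of-minima trick to bridge the gap between the finitary $\bL^{s,n}$ and the infinitary $\bL^{s,n}_{\kappa,\omega}$.

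For $(1)\Rightarrow(2)$, I would note that every finitary sort logic theory is in $\bL^{s,n}_{\kappa,\omega}$, so applying the forward direction of Theorem \ref{thm:PinStrong} to the smallest $\Pi_n$-strong cardinal $\kappa$ immediately gives that $\kappa$ satisfies (2). For the minimality clause: if some $\kappa'<\kappa$ also satisfied (2), the backward direction below would produce a $\Pi_n$-strong cardinal $\leq\kappa'$, contradicting the minimality of $\kappa$ as a $\Pi_n$-strong cardinal.

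For $(2)\Rightarrow(1)$, I would replay the backward direction of Theorem \ref{thm:PinStrong} with the same theory $T$, except that the $\bL_{\kappa,\omega}$-elementary diagram of $V_{\kappa+1}$ is replaced by its first-order elementary diagram; all other components of $T$ (Magidor's $\Phi$, the constants $c_i<c<c_\kappa$, and the two sort-logic axioms involving $\Phi^{(n)}$) are already finitary. The decomposition $T=\bigcup_{\alpha<\kappa}T_\alpha$ still witnesses the hypotheses of (2), so property (2) supplies, for arbitrarily large $\lambda\in C^{(n)}$, Henkin models $(M_\lambda,N_\lambda)$ with first-order elementary embeddings $j_\lambda:V_{\kappa+1}\to N_\lambda$ satisfying the same structural conditions extracted in Theorem \ref{thm:PinStrong}: $N_\lambda = V_\beta^{M_\lambda}$, $V_\lambda\subseteq M_\lambda$, $M_\lambda$ is $n$-correct for $V_\lambda$, $j_\lambda(\kappa)\geq\lambda$, and the $C^{(n)}$-preservation condition. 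Letting $\mu_\lambda=\crit(j_\lambda)\leq\kappa$, an application of the extender-power construction from Lemma \ref{lem:localPinStrong} (together with a pigeonhole over $\lambda$ to fix the critical point) extracts a $\Pi_n$-strong cardinal $\mu^*\leq\kappa$. Applying the forward direction of Theorem \ref{thm:PinStrong} to $\mu^*$ yields that $\mu^*$ satisfies property (2), whence by minimality of $\kappa$ we have $\kappa\leq\mu^*\leq\kappa$, forcing $\mu^*=\kappa$ and so that $\kappa$ is itself $\Pi_n$-strong. Minimality of $\kappa$ among $\Pi_n$-strong cardinals follows symmetrically.

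The main obstacle is that the first-order elementary diagram, unlike its $\bL_{\kappa,\omega}$-counterpart, does not pin down $\crit(j_\lambda)=\kappa$: in Theorem \ref{thm:PinStrong} this was forced by the $\bL_{\kappa,\omega}$-definability of each ordinal below $\kappa$, but that tool is unavailable finitarily. The critical point $\mu^*$ may a priori be strictly smaller than $\kappa$, and the delicate part of the adaptation is verifying that Lemma \ref{lem:localPinStrong} can be applied at $\mu^*$---specifically, that one can select a subfamily of $\lambda$'s along which all the critical points coincide and the images $j_\lambda(\mu^*)$ remain cofinal in the ordinals, so that the induced extenders actually witness $\mu^*$ being $\Pi_n$-strong. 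The ``smallest cardinal'' framing of the theorem is essential at this point, since the identification $\mu^*=\kappa$ is obtained only by comparing with the forward direction of Theorem \ref{thm:PinStrong}, not by any direct combinatorial control of $\crit(j_\lambda)$.
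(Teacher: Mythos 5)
Your architecture is exactly the adaptation the paper has in mind (the paper itself only gestures at the proof, deferring details to the second author's thesis): run the forward direction of Theorem \ref{thm:PinStrong} unchanged, since a finitary $\bL^{s,n}$-theory is in particular an $\bL^{s,n}_{\kappa,\omega}$-theory; run the backward direction with the first-order elementary diagram in place of the $\bL_{\kappa,\omega}$-diagram; accept that the critical point may drop below $\kappa$; extract a $\Pi_n$-strong $\mu^*\leq\kappa$; and close the loop with the two minimality comparisons. Your logical bookkeeping is also right: both minimality claims follow from the one-directional lemmas ``$\Pi_n$-strong implies the compactness property'' and ``the compactness property implies some $\Pi_n$-strong cardinal below,'' with no circularity.

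The one step you explicitly leave open --- that $j_\lambda(\mu^*)$ might not reach $\lambda$, so that Lemma \ref{lem:localPinStrong} seems inapplicable at $\mu^*$ --- is not an actual obstruction, and you should close it as follows. Derive the extender exactly as in Lemma \ref{lem:localPinStrong}, i.e., set $X\in E_a$ iff $a\in j_\lambda(X)$ for $a\in[\lambda]^{<\omega}$ and $X\subseteq[\kappa]^{<\omega}$: this requires only $j_\lambda(\kappa)\geq\lambda$, which you do have from the largest-cardinal argument, and is indifferent to where the critical point sits below $\kappa$. The resulting $j_E\colon V\to M_E$ satisfies $\crit(j_E)=\crit(j_\lambda)=\mu^*$ (since $k_E\restriction\lambda=\id$ and $k_E\circ j_E$ agrees with $j_\lambda$ on $V_{\kappa+1}$), together with $V_\lambda\subseteq M_E$ and $j_E(C^{(n)}\cap V_\kappa)\cap V_\lambda=C^{(n)}\cap V_\lambda$ by the same $k_E$-factoring, whence $M_E\models\lambda\in C^{(n)}$. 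The only clause of $\lambda$-$\Pi_n$-strongness you cannot verify is $j_E(\mu^*)>\lambda$, but the proof of Lemma \ref{lem:localPinStrong} already records that this clause is superfluous for the \emph{global} notion (the analogue of \cite[Exercise 26.7(b)]{kan} together with \cite[Proposition 5.9]{bagaria2023weak}); since your pigeonhole yields a proper class of suitable $\lambda$ with the same critical point $\mu^*$, the global notion is exactly what you are proving. In short, Lemma \ref{lem:localPinStrong} needs only the cosmetic restatement that $\crit(j)$ may be any $\mu\leq\kappa$ while the extender remains indexed over $[\kappa]^{<\omega}$, with conclusion ``$\mu$ is $\Pi_n$-strong''; no cofinality of the images $j_\lambda(\mu^*)$ is needed.
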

    For a logic $\cL$, let us call the cardinal $\kappa$ that we get from exchanging $\bL^{s,n}$ in (2) above for $\cL$ the \emph{$n$-Henkin chain compactness} (HCC) number of $\cL$ and write $n\text{-HCC}(\cL)=\kappa$. Combined with Fact \ref{fact:BagariaWilson0}, Theorem \ref{thm:HCCsort} above then shows that for $n\geq 2$, the existence $n\text{-HCC}(\bL^{s,n})$ is equivalent to $\WVP({\Pi_n})$. Adapting the first author's argument for \cite[Theorem 4.7]{bon2020} to the setting of Henkin models considered here shows further that $1\text{-HCC}(\bL^2)$ is the smallest strong cardinal. The existence of $1\text{-HCC}(\cL^2)$ is thus equivalent to $\WVP(\Pi_1)$.

    Moreover, because the strength of any logic is bounded\footnote{A proof of this probably folklore result can be found in \cite[Corollary 1.2.24]{osinski2024}.} by $\bL^{s,n}_{\kappa, \omega}$ for some $n$ and $\kappa$, combining Theorems \ref{fact:BagariaWilson} and \ref{thm:PinStrong}, we get a model-theoretic characterization of WVP and ``Ord is Woodin" reminiscent of Makowsky's result about VP (cf.\ Theorem \ref{fact:BagariaMakowsky}). Our results show that the characterization also localizes to $\Pi_n$-strong cardinals analogously to how Makowsky's result localizes to $C^{(n)}$-extendible cardinals by the first author's Theorem \ref{fact:BoneySortLogic}.
	\begin{corollary}
		The following are equivalent:
		\begin{enumerate}
			\item[(1)] WVP.
            \item[(2)] Ord is Woodin.
			\item[(3)] For every natural number $n$ and every logic $\cL$, $n{\text{-HCC}(\cL)}$ exists.
		\end{enumerate}
	\end{corollary}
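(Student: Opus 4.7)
The equivalence $(1)\Leftrightarrow(2)$ is exactly Fact \ref{fact:BagariaWilson}, so the work of the corollary lies in the biconditional between $(3)$ and the clause ``for every $n$, there is a $\Pi_n$-strong cardinal'' appearing in that fact. The direction from $(3)$ to this clause is immediate by specialization: for each $n \geq 2$, the existence of $n\text{-HCC}(\bL^{s,n})$ together with Theorem \ref{thm:HCCsort} produces a $\Pi_n$-strong cardinal, and for $n = 1$ the same is given by the $1\text{-HCC}(\bL^2)$ remark recorded just above the statement of the corollary. Invoking Fact \ref{fact:BagariaWilson} then closes the loop to $(1)$ and $(2)$.

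For the converse, assume a $\Pi_k$-strong cardinal exists for each $k$, fix an arbitrary logic $\cL$ and a natural number $n \geq 1$, and produce a cardinal witnessing the existence of $n\text{-HCC}(\cL)$ as follows. By the folklore bounding result cited before the corollary, choose $n_0$ and $\kappa_0$ so that $\cL \subseteq \bL^{s,n_0}_{\kappa_0,\omega}$, set $m := \max(n,n_0)$, and pick a $\Pi_m$-strong cardinal $\kappa \geq \kappa_0$. Theorem \ref{thm:PinStrong} then supplies the $m$-HCC property at $\kappa$ for $\bL^{s,m}_{\kappa,\omega}$, and since $\cL \subseteq \bL^{s,n_0}_{\kappa_0,\omega} \subseteq \bL^{s,m}_{\kappa,\omega}$ the same $\kappa$ witnesses this property for $\cL$.

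What remains is to promote the $m$-HCC property to the $n$-HCC property of $\cL$, which is the only step not reducible to a direct citation. Given $\lambda \in C^{(n)}$ and a theory $T \subseteq \cL$ written as an increasing $\kappa$-union of appropriately satisfiable pieces, I would use cofinality of $C^{(m)}$ to pick $\lambda' \in C^{(m)}$ with $\lambda' \geq \lambda$, apply $m$-HCC to $T$ and $\lambda'$, and verify that the resulting Henkin model $\mathcal M = (M,\mathcal A)$ meets the $n$-level demands: $|A| \geq \lambda' \geq \lambda$; fullness up to $V_{\lambda'}$ implies fullness up to $V_\lambda$; $m$-correctness for $V_{\lambda'}$ restricts to $n$-correctness for $V_\lambda$, since every $\Sigma_n$-formula is $\Sigma_m$ and $V_\lambda \subseteq V_{\lambda'}$; and $\mathcal M \models \text{ZFC}^*_n$ provided the finite fragments are arranged so that $\text{ZFC}^*_n \subseteq \text{ZFC}^*_m$, which we may freely do. The main (mild) obstacle is precisely this bookkeeping --- aligning the parameters $n_0,\kappa_0$ from the bound with the $\Pi_m$-strong cardinal and checking that the descent from $m$-HCC of a larger sort logic to $n$-HCC of $\cL$ preserves each clause in the definition; no additional large cardinal content is needed beyond what Theorem \ref{thm:PinStrong} and Fact \ref{fact:BagariaWilson} already provide.
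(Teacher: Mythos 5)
Your proposal is correct and follows essentially the same route the paper indicates: the equivalence of (1) and (2) via Fact \ref{fact:BagariaWilson}, the passage between (3) and the existence of $\Pi_n$-strong cardinals via Theorems \ref{thm:PinStrong} and \ref{thm:HCCsort} (plus the $n=1$ remark), and the reduction of an arbitrary logic to $\bL^{s,m}_{\kappa,\omega}$ via the folklore bounding result. Your explicit descent from the $m$-HCC property of the ambient sort logic to the $n$-HCC property of $\cL$ is exactly the bookkeeping the paper leaves implicit, and it is carried out correctly (noting only that picking a $\Pi_m$-strong cardinal above $\kappa_0$ uses the proper-class clause of Fact \ref{fact:BagariaWilson}).
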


    Figure 1 illustrates the results connecting WVP, $\Pi_n$-strong cardinals and the Henkin chain compactness properties isolated here.

    \begin{figure}[h]
	\centering
	\noindent \includegraphics[width=0.99\hsize]{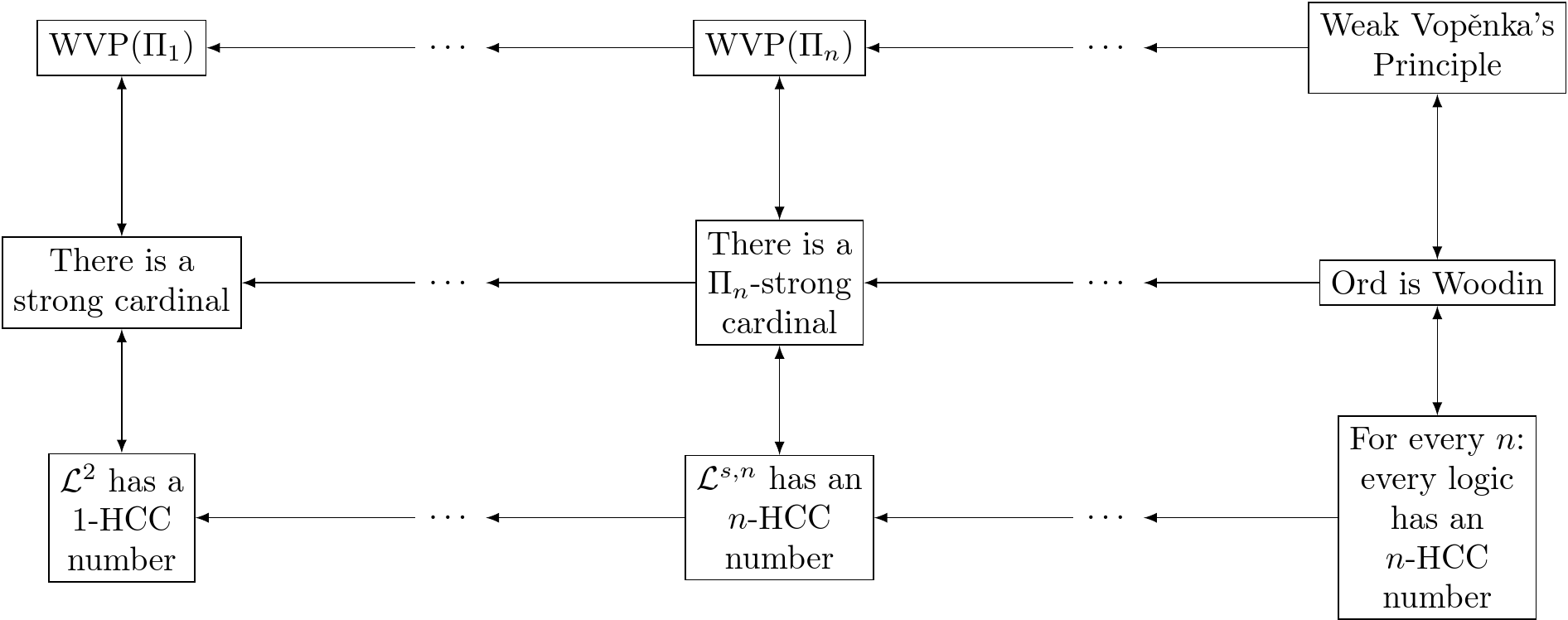}
	\caption{Relations between $\WVP$, $\Pi_n$-strong cardinals, and HCC numbers.}
    \end{figure}
    
	The analogy between VP and WVP can be pushed even further. In \cite{osinski2024Henkin}, the second author and Poveda show that also $C^{(n)}$-extendible cardinals, and hence VP, can be characterized by compactness properties with Henkin models by considering a stronger notion of Henkin model and switching to theories $T$ which are ${<} \kappa$-satisfiable, as opposed to satisfiable along a $\kappa$-chain as above. The patterns of $C^{(n)}$-extendible cardinals below VP, and of $\Pi_n$-strong cardinals below WVP can thus be obtained from each other by switching between HCC numbers and the compactness properties for the stronger notion of Henkin models from \cite{osinski2024Henkin}.  
	
	We would like to make some more comments about related results. The first author \cite[Theorem 4.7]{bon2020} showed that strong cardinals can be characterized via a compactness property that infers the existence of a classical Henkin model for second-order logic of a theory $T \subseteq \bL^2$ which is satisfiable along a $\kappa$-chain as the theory considered in the compactness principle from Theorem \ref{thm:PinStrong}. And in \cite[Theorem 3.7]{boney2024model}, the authors provide a characterization of Woodin cardinals by compactness properties providing the existence of Henkin models. Our theorem \ref{thm:PinStrong} can both be seen as a generalization of the characterization of strength to $\Pi_n$-strength, and as a localization of the characterization of Woodinness. 
	
	\section{Huge type omission in finitary logic}\label{sec:huge}
	
	Makowsky's and Stavi's results about VP motivated the question whether there are axioms about model-theoretic properties of strong logics with higher consistency strength than VP. This is indeed the case. The first author showed that certain \emph{compactness for type omission} properties of $\bL_{\kappa, \kappa}$ imply the existence of a huge cardinal (cf. \cite[Corollary 3.7(b)]{bon2020}). Further, Wilson showed that huge cardinals can be characterized by Löwenheim-Skolem properties of fragments of the class-sized logic $\bL_{\infty, \infty}$. Wilson's results remain unpublished but were presented, e.g., in a talk at the European Set Theory Conference 2022 \cite{wil2022b}. In the very same talk, Wilson asked whether considering properties of \emph{infinitary} logics -- as opposed to finitary ones -- is necessary to get large cardinal strength above VP by model-theoretic means, as the first author's and his results might suggest. We will answer this question negatively by showing that compactness for type omission of $\bL(\WF)$ can characterize huge cardinals.
	
	Compactness for type omission principles provide models of a theory such that the model simultaneously omits a given type. Benda showed that supercompact cardinals can be characterized in this way (cf. \cite{ben1978}). The first author expanded on this result and introduced a general framework given in the following definition to formulate compactness for type omission properties to show that many large cardinals can be characterized in that way (cf. \cite{bon2020}).

    One important observation is that, in contrast to the classic compactness principles, $\omega$ does not satisfy the supercompact-style compactness for type omission with $\bL_{\omega, \omega}$.  This is related to the idea that it is $\ZFC$-provable that there is no fine ultrafilter on $\cP_\omega \lambda$ such that every regressive function is constant on a $U$-large set; this is witnessed by the regressive function $\max$.  Thus, the attempt to find compactness for type omission in first-order logic must look at cardinals above $\omega$.
	\begin{definition}
		Let $\kappa$ be a cardinal, $\kappa \leq \lambda$, $I \subseteq \mathcal P(\lambda)$, and $\cL$ a logic.
		\begin{enumerate}
			\item[(i)] $I$ is \emph{$\kappa$-robust} if for every $\alpha < \kappa$, we have $I \subseteq \{s \in \mathcal P(\lambda) \colon |s \cap \kappa| < \kappa\}$ and $\{s \in I \colon \alpha \in s\} \neq \emptyset$.
			\item[(ii)] $C \subseteq I$ \emph{contains a strong $\kappa$-club} if there is a function $F: [\lambda]^2 \rightarrow \mathcal P_\kappa \lambda$ such that 
			\[
			C(F) = \{s \in I \colon |s|\geq \omega \wedge \forall x,y \in s (F(x,y) \subseteq s) \}\subseteq C.
			\]
			\item[(iii)] Let $X$ be a set that is written as an increasing union $X = \bigcup_{s \in I} X_s$, i.e., for $s \subseteq t$, also $X_s \subseteq X_t$. We say that \emph{the union respects the index} iff there is a collection $\{X^\alpha \colon \alpha \in \lambda\}$ such that for each $s \in I$:
			\[
			X_s = \bigcup_{\alpha \in s} X^\alpha.
			\]
			\item[(iv)] Given a $\kappa$-robust $I$, the logic $\cL$ is \emph{$I$-$\kappa$-compact for type omission} if the following holds: For any theory $T \subseteq \cL$ which can be written as an increasing union $T = \bigcup_{s \in I} T_s$ that respects the index, and any type $p(x) = \{\varphi_i(x) \colon i < \lambda\}$ with subsets $p_s = \{\varphi_i(x) \colon i \in s\} \subseteq \cL$ for $s \in I$, if the set 
			\[
			\{s \in I \colon T_s \text{ has a model omitting }p_s \}
			\]
			contains a strong $\kappa$-club, then $T$ has a model omitting $p$.
		\end{enumerate}
	\end{definition}
	A result by Menas shows that containing a strong $\kappa$-club can be considered as a generalization of being a member of the club filter over $\mathcal P_\kappa \lambda$ to the case of arbitrary $I \subseteq \mathcal P(\lambda)$ (cf., e.g., \cite[Proposition 25.3]{kan}). Recall that any fine, normal, $\kappa$-complete ultrafilter over $\mathcal P_\kappa \lambda$ contains the club filter (cf., e.g., \cite[Proposition 25.4]{kan}). The first author showed that the notion of containing a strong $\kappa$-club extends this result in the following way:
	\begin{fact}[Boney {\cite[Fact 3.2]{bon2020}}]
		Let $I \subseteq \mathcal P(\lambda)$ be $\kappa$-robust. If $U$ is a fine, normal, $\kappa$-complete ultrafilter over $I$, then $C(F) \in U$ for all $F: [\lambda]^2 \rightarrow \mathcal P_\kappa \lambda$. 
	\end{fact}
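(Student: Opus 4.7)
The plan is to combine the three properties of $U$ in sequence: $\kappa$-completeness together with fineness to handle individual values of $F$, and then normality to diagonalize across all pairs in $[\lambda]^2$ at once.

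First, for each fixed pair $\alpha, \beta < \lambda$, I would verify that $D_{\alpha,\beta} := \{s \in I : F(\alpha,\beta) \subseteq s\} \in U$. Since $|F(\alpha,\beta)| < \kappa$, and since by fineness $\{s \in I : \gamma \in s\} \in U$ for every $\gamma \in F(\alpha,\beta)$, $\kappa$-completeness closes $U$ under this intersection. The same argument applied to any fixed countable set of ordinals yields $\{s \in I : |s| \geq \omega\} \in U$.

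Next, I would argue by contradiction. Suppose $C(F) \notin U$; then $B := (I \setminus C(F)) \cap \{s \in I : |s| \geq \omega\} \in U$, and for each $s \in B$ I may pick $x(s), y(s) \in s$ witnessing $F(x(s), y(s)) \not\subseteq s$. Both induced functions $x, y : B \to \lambda$ are regressive in the sense demanded by normality, so applying normality first to $x$ yields some $\alpha^* < \lambda$ with $B_1 := \{s \in B : x(s) = \alpha^*\} \in U$, and then applying normality to $y \upharpoonright B_1$ yields some $\beta^* < \lambda$ with $B_2 := \{s \in B_1 : y(s) = \beta^*\} \in U$. But on $B_2$ we have $F(\alpha^*, \beta^*) \not\subseteq s$, directly contradicting $D_{\alpha^*, \beta^*} \in U$ from the previous step.

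The only subtlety worth pausing on is the precise formulation of normality for an ultrafilter on $I \subseteq \mathcal{P}(\lambda)$ rather than on $\mathcal{P}_\kappa \lambda$; the $\kappa$-robustness hypothesis guarantees that the standard Fodor form (every $f : I \to \lambda$ that is regressive on a $U$-large set is constant on a $U$-large set) applies cleanly. Once this is noted, the remainder is bookkeeping, and the crux of the proof is simply the idea of applying normality twice, once per coordinate of $F$'s domain.
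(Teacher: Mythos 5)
Your argument is correct, and it is essentially the standard Menas-style proof: the paper itself states this as a cited fact without proof, and the argument in \cite{bon2020} (following \cite[Proposition 25.3]{kan}) proceeds exactly as you do, getting each $D_{\alpha,\beta}$ into $U$ by fineness plus $\kappa$-completeness and then applying normality twice to the two coordinates of a putative counterexample. Your closing remark about the formulation of normality on a $\kappa$-robust $I\subseteq\mathcal P(\lambda)$ is the right subtlety to flag, and it is handled exactly as you say.
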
  
	Thus, the intuition for $I$-$\kappa$ compactness for type omission is, that if on a large set of $s \in I$, $T_s$ has a model omitting $p_s$, then $T$ has a model omitting $p$, where by ``large set" we mean one contained in every possible fine, normal, $\kappa$-complete ultrafilter over $I$.
	
	Recall that $\kappa$ is huge with target $\lambda$ if and only if there is a fine, normal, $\kappa$-complete ultrafilter $U$ over 
	\[
	[\lambda]^\kappa_* = \{s \in \mathcal P(\lambda) \colon \ot(s) = \kappa \text{ and } s \setminus \kappa \neq \emptyset \}.
	\]
	Note that usually, the condition ``$s \setminus \kappa \neq \emptyset$" is not included (cf., e.g., \cite[Theorem 24.8]{kan}), but it is simple to see that it can be adjoined. The reason we include it is that without it, our index set would not be $\kappa$-robust.
	
	The first author showed the following theorem.
	\begin{fact}[Boney {\cite[Corollary 3.7(3)]{bon2020}}]\label{fact:BoneyHuge}
		A cardinal $\kappa$ is huge with target $\lambda$ iff $\bL_{\kappa, \kappa}$ is $[\lambda]^\kappa_*$-$\kappa$-compact for type omission.
	\end{fact}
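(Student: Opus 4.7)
My plan is to prove the two implications separately: the forward direction by an ultraproduct construction using the measure furnished by hugeness, and the backward direction by extracting such a measure from a model produced by the compactness principle.

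For the forward direction, fix a fine, normal, $\kappa$-complete ultrafilter $U$ on $[\lambda]^\kappa_*$ witnessing hugeness. Suppose $T = \bigcup_{s \in [\lambda]^\kappa_*} T_s$ respects the index $\{T^\alpha\}_{\alpha<\lambda}$, $p(x) = \{\varphi_i(x) : i < \lambda\}$ is a type, and $X$ is the set of $s$ on which $T_s$ has a model $M_s$ omitting $p_s$; the hypothesis that $X$ contains a strong $\kappa$-club places $X \in U$ by the fact already quoted in the excerpt. Form the $\bL_{\kappa,\kappa}$-ultraproduct $\mathcal M = \prod_{s \in X} M_s / U$; the $\kappa$-completeness of $U$ is exactly what is needed to extend \L o\'s's theorem to the ${<}\kappa$-sized conjunctions, disjunctions, and quantifier blocks of $\bL_{\kappa,\kappa}$. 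Any $\varphi \in T$ lies in some $T^\alpha$, hence in $T_s$ for every $s$ in the $U$-large set $\{s : \alpha \in s\}$ (by fineness), giving $\mathcal M \models \varphi$. For the omission of $p$, given $[f]_U \in \mathcal M$ choose at each $s \in X$ an index $i_s \in s$ with $M_s \not\models \varphi_{i_s}(f(s))$; the map $s \mapsto i_s$ is regressive, so by normality it is constant on a $U$-large set with some value $i^*$, yielding $\mathcal M \not\models \varphi_{i^*}([f]_U)$.

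For the backward direction, work in a language $\tau = \{\in\} \cup \{c_a : a \in H(\theta)\} \cup \{d\}$ with $\theta$ large enough above $\lambda$, and let $T$ combine the $\bL_{\kappa,\kappa}$-elementary diagram of $(H(\theta), \in, a)_{a \in H(\theta)}$ with the axioms $d \subseteq c_\lambda$, $\ot(d) = c_\kappa$, $d \setminus c_\kappa \neq \emptyset$, and $c_\alpha \in d$ for each $\alpha < \lambda$. Decompose $T = \bigcup_{s \in [\lambda]^\kappa_*} T_s$ so that $T_s$ only uses constants canonically indexed by $s$; each $T_s$ is then realized by $H(\theta)$ itself, interpreting $d$ as $s$, since the axioms $c_\alpha \in d$ appear in $T_s$ only for $\alpha \in s$. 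Using the type $p(x) = \{x \in d \wedge x \neq c_\alpha : \alpha < \lambda\}$, whose restriction $p_s$ is omitted by $(H(\theta), d = s)$ because every element of $s$ is some $c_\alpha^{H(\theta)}$ with $\alpha \in s$, the compactness for type omission produces a model $\mathcal N$ with an elementary $j : H(\theta) \to \mathcal N$ coming from the diagram and a distinguished $d^{\mathcal N}$ every $\mathcal N$-member of which is of the form $j(\alpha)$ for some $\alpha < \lambda$.

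Define the sought ultrafilter by $U = \{A \subseteq [\lambda]^\kappa_* : d^{\mathcal N} \in^{\mathcal N} j(A)\}$. Fineness is then the axiom $c_\alpha \in d$; $\kappa$-completeness follows by absorbing a ${<}\kappa$-sized intersection into the corresponding $\bL_{\kappa,\kappa}$-conjunction evaluated in $\mathcal N$; and concentration on $[\lambda]^\kappa_*$ is immediate from the axioms on the size and order type of $d$. The main obstacle is normality: for regressive $f : [\lambda]^\kappa_* \to \lambda$, the element $j(f)(d^{\mathcal N})$ lies in $d^{\mathcal N}$ in $\mathcal N$, and the type omission forces it to equal $j(\alpha_0)$ for some standard $\alpha_0 < \lambda$, whence elementarity yields $\{s : f(s) = \alpha_0\} \in U$. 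The delicate point is that the witness $\alpha_0$ really is in the standard part, which is precisely what the type-omission hypothesis has been arranged to guarantee; an accompanying bookkeeping hurdle is the initial choice of $\theta$ and of the indexing of constants by $[\lambda]^\kappa_*$ so that the chain decomposition literally respects the index.
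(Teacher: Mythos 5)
Your argument is correct and follows essentially the intended route: the paper states this result only as a quoted Fact from \cite{bon2020} without proof, and your two directions --- {\L}o\'s's theorem for $\kappa$-complete ultraproducts of $\bL_{\kappa,\kappa}$-structures in one direction, and deriving a fine, normal, $\kappa$-complete ultrafilter on $[\lambda]^\kappa_*$ from a type-omitting model of the infinitary elementary diagram in the other --- match both Boney's original argument and the ultrafilter-derivation style the paper itself uses in its reproof of the Hayut--Magidor supercompactness result. The one step worth flagging is that your verification of $\kappa$-completeness genuinely relies on the ${<}\kappa$-ary conjunctions of $\bL_{\kappa,\kappa}$; that is precisely the step unavailable for $\bL(\WF)$, which is why the paper's Theorem \ref{thm:hugeTypeOmissionWF} must instead extract a transitive elementary embedding rather than an ultrafilter.
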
 
	We will show that we may replace $\bL_{\kappa, \kappa}$ by $\bL(\WF)$. We will use the following lemma.
	\begin{lemma}
		Let $\kappa$ and $\lambda$ be cardinals such that $\lambda > \kappa$. If for all transitive sets $M$ with $\lambda \in M$ and $M^\kappa \subseteq M$ there is a transitive set $N$ and an elementary embedding $j: M \rightarrow N$ such that $\crit(j) = \kappa$, $\lambda = j(\kappa)$ and $j `` \lambda \in N$, then $\kappa$ is huge with target $\lambda$.
		\begin{proof}
			Take some cardinal $\gamma > \lambda$ such that $V_\gamma$ is closed under $\kappa$-sequences and $V_{\lambda + 1} \in V_\gamma$. Then by our assumption, we have that there is a transitive $N$ and an elementary embedding $j: V_\gamma \rightarrow N$ such that $\crit(j) = \kappa$, $j(\kappa) = \lambda$ and $j `` \lambda \in N$. Now define an ultrafilter $U$ on $[\lambda]^\kappa_*$ by letting for $X \subseteq [\lambda]^\kappa_*$:
			\[
			X \in U \text{ iff } j``\lambda \in j(X).
			\]
			It is standard to check that $U$ is a fine, normal, and $\kappa$-complete ultrafilter.	
		\end{proof}
	\end{lemma}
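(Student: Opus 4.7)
My approach would be to imitate the standard derived-ultrafilter construction, with the caveat that the hypothesis only applies to set models, so we feed it a sufficiently closed $V_\gamma$ in place of the full universe. Concretely, I would fix a cardinal $\gamma > \lambda$ with $\cof(\gamma) > \kappa$ large enough that $[\lambda]^\kappa_*$ and its powerset both lie in $V_\gamma$ (any regular $\gamma > \lambda^+$ suffices). Then $V_\gamma$ is transitive, contains $\lambda$ as an element, and satisfies $V_\gamma^\kappa \subseteq V_\gamma$, so the hypothesis supplies a transitive $N$ and an elementary embedding $j \colon V_\gamma \to N$ with $\crit(j) = \kappa$, $j(\kappa) = \lambda$, and $j``\lambda \in N$.

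Next I would define $U \subseteq \mathcal{P}([\lambda]^\kappa_*)$ by
\[
X \in U \iff j``\lambda \in j(X),
\]
which is well-defined since every such $X$ lies in $V_\gamma$ by the choice of $\gamma$.

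The rest is routine. The concentration $[\lambda]^\kappa_* \in U$ reduces to checking $j``\lambda \in j([\lambda]^\kappa_*)$: we have $j``\lambda \subseteq j(\lambda)$ since $j$ is order-preserving on ordinals, its external order type is $\lambda = j(\kappa)$ which $N$ computes correctly by transitivity, and $j(\kappa+1) > \lambda$ witnesses the $s \setminus \lambda \neq \emptyset$ clause. Fineness holds because $j(\alpha) \in j``\lambda$ for each $\alpha < \lambda$. The ultrafilter and $\kappa$-completeness properties follow from elementarity together with $\crit(j) = \kappa$, so that $j$ fixes sequences of length $<\kappa$ and hence commutes with $<\kappa$-sized Boolean operations. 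For normality, given $f \colon [\lambda]^\kappa_* \to \lambda$ regressive on a $U$-large set, the ordinal $\tilde\alpha := j(f)(j``\lambda)$ lies in $j``\lambda$ by elementarity, hence equals $j(\alpha^*)$ for a unique $\alpha^* < \lambda$, and then $\{s : f(s) = \alpha^*\} \in U$.

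No step is technically deep; the only conceptual point worth flagging is the internal/external accounting around $j``\lambda$. Externally it has order type $\lambda$, which is exactly what $N$ must see in order to place it inside $j([\lambda]^\kappa_*)$, because $j$ sends ``sets of order type $\kappa$ inside $\lambda$'' to ``sets of order type $\lambda$ inside $j(\lambda)$''. Keeping this bookkeeping straight is the main thing to be careful about; everything else is the classical derived-ultrafilter calculation.
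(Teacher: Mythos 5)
Your proposal is correct and follows essentially the same route as the paper: apply the hypothesis to a sufficiently closed $V_\gamma$, derive the ultrafilter $U$ on $[\lambda]^\kappa_*$ via $X \in U \iff j``\lambda \in j(X)$, and verify fineness, normality, and $\kappa$-completeness (the paper simply labels this verification ``standard''). The only difference is that you spell out the routine checks, including the concentration point that $N$ correctly computes $\ot(j``\lambda) = \lambda = j(\kappa)$, which the paper leaves implicit.
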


	\begin{theorem}\label{thm:hugeTypeOmissionWF}
		Let $\kappa$ and $\lambda$ be cardinals such that $\lambda > \kappa$. Then $\kappa$ is huge with target $\lambda$ iff $\bL(\WF)$ is $[\lambda]^\kappa_*$-$\kappa$-compact for type omission.
		\begin{proof}
			The forward direction immediately follows from \ref{fact:BoneyHuge}, because $\bL_{\kappa \kappa}$ can define well-foundedness. So assume that $\bL(\WF)$ is $[\lambda]^\kappa_*$-$\kappa$-compact for type omission and take a transitive $M$ with $\lambda \in M$ and such that $M^\kappa \subseteq M$. We will show that there is an elementary embedding $j: M \rightarrow N$ such that $N$ is transitive, $\crit(j) = \kappa$, $j(\kappa) = \lambda$ and $j``\lambda \in N$. Let $\{c_i \colon i \in M\}$ be the collection of variables used to formulate the elementary diagram of $M$ and let $c$ and $d$ be new constants. Then let
			\[
			T = \text{ElDiag}_{\bL(\WF)}(M) \cup \{c_\alpha < c < c_\kappa \colon \alpha < \kappa \}\cup \{c_\alpha \in d \wedge |d| = c_\kappa \colon \alpha < \lambda \}
			\] 
			and
			\[
			p(x) = \{x \in d \cup c \} \cup \{x \neq c_\alpha \colon \alpha < \lambda \}.
			\]
			Let us argue that if $N$ is a model of $T$ omitting $p$, then $N$ has all required properties by letting $j: M \rightarrow N$, $c_x^M \mapsto c_x^N$. Here we identify $N$ with its transitive collapse (and so $N$ is transitive) as justified by $N$ being well-founded and extensional by $T$.
			
			Because $c^N$ has order type at least $\kappa$ and is smaller than $c_\kappa^N = j(\kappa)$, we have to have $j(\kappa) > \kappa$. In particular, $\crit(j) \leq \kappa$. To see that $\crit(j) = \kappa$, the type $p$ comes into play. Assume that $\alpha = \crit(j) < \kappa$ and consider $\alpha + 1$. Clearly $c_\beta^N = \beta = j(\beta) < \alpha + 1 < c^N$ for all $\beta < \alpha$. But also $j(\alpha) > \alpha + 1$ and so $c_\beta^N > \alpha + 1$ for all $\beta \geq \alpha$. This means that $\alpha + 1$ realizes $p$, which is a contradiction. So $\crit(j) = \kappa$.

			We also have that $d^N = j``\lambda$ and so $j``\lambda \in N$: Because $c_\alpha^N \in d^N$ for all $\alpha < \lambda$ we have $j``\lambda \subseteq d^N$. And now if $x \in d^N$ and $x \neq j(\alpha) = c_\alpha^N$ for every $\alpha < \lambda$, then $x$ would realize $p$. Thus also $d^N \subseteq j``\lambda$.

			Finally, $N$ knows that $d^N = j``\lambda$ is a set of ordinals and thus has an order type. But clearly $\ot(j `` \lambda) = \lambda$, thus $N \models \ot(d^N) = \lambda$ and further $N \models j(\kappa) = c_\kappa = |d| = \lambda$ and thus $j(\kappa) = \lambda$. 
			
			So we are done if we can show that $T$ has a model omitting $p$. For this purpose we show the following claim.
			\begin{claim} 
				For every $s \in [\lambda]^\kappa_*$, if $s \cap \kappa$ is a limit ordinal $< \kappa$, then $M$  can be expanded to a model of $T_s$ omitting $p_s$, where
				\[
				T_s = \text{ElDiag}(M)_{\bL(\WF)} \cup \{c_\alpha < c < c_\kappa \colon \alpha \in \kappa \cap s\} \cup  \{c_\alpha \in d \wedge |d| = c_\kappa \colon \alpha \in s \}
				\]
				and
				\[
				p_s(x) = \{x \in d \cup c\} \cup \{x \neq c_\alpha \colon \alpha \in s \}.
				\]
			\end{claim}
			To show the claim, suppose $s \in [\lambda]^\kappa_*$ such that $s \cap \kappa$ is a limit ordinal $s \cap \kappa = \beta < \kappa$. Let $c^M = \beta$ and $d^M = s$. Note that $s \in M$ by closure of $M$ under $\kappa$ sequences, so the definition of $c^M$ and $d^M$ make sense. Then $(M,c^M,d^M) \models T_s$: Clearly $c^M = \beta < \kappa = c_\kappa^M$. And if $\alpha \in \kappa \cap s = \beta$, then $c_\alpha^M = \alpha \in \beta$.  Further $\alpha = c_\alpha^M \in s = d^M$ for all $\alpha \in s$ and $|d^M| = |s| = \kappa = c_\kappa^M$. Also, we get that $(M, c^M, d^M)$ omits $p_s$. For if $x \in c^M = s \cap \kappa$, then $x = \alpha = c_\alpha^M$ for some $\alpha \in s$. And if $x \in d^M$, then $x = \alpha = c_\alpha^M$ for some $\alpha \in d^M = s$. So any $x$ omits $p$.
			
			Therefore, to get a model of $T$ omitting $p$, by $[\lambda]^\kappa_*$-$\kappa$-type-omission and the claim, it is sufficient to find a function $F: [\lambda]^2 \rightarrow \mathcal P_\kappa \lambda$ such that for any $s \in [\lambda]^\kappa_*$ with the property that for all $x,y \in s$ we have $F(x,y) \subseteq s$, it holds that $s \cap \kappa$ is a limit ordinal $< \kappa$. Such an $F$ is given by
			\[
			F(x,y) = 
			\begin{cases}
				x+2, & \text{ if } x \in \kappa\\
				\emptyset, & \text{ otherwise.}
			\end{cases}
			\]
			If $s \in [\lambda]^\kappa_*$ is given such that for all $x,y \in s$ we have $F(x,y) \subseteq s$. Then if $\alpha \in s \cap \kappa$, by assumption, $F(\alpha, \alpha) = \alpha + 2 \subseteq s$. Thus $\beta \in s \cap \kappa$ for all $\beta \leq \alpha + 1$. This shows that $s \cap \kappa$ is a limit ordinal. By definition of $[\lambda]^\kappa_*$, $\ot(s \cap \kappa) < \kappa$. Hence, $s \cap \kappa$ is a limit ordinal less than $\kappa$.
		\end{proof}
	\end{theorem}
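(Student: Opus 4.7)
The plan is to prove both directions by translating between huge embeddings and models of a well-chosen $\bL(\WF)$-theory together with an omitted type. For the forward direction, $\bL(\WF)$ embeds into $\bL_{\kappa,\kappa}$ (well-foundedness of a first-order definable relation is expressible by negating the existence of an $\omega$-descending chain, a sentence of $\bL_{\omega_1,\omega_1}$), so compactness for type omission for $\bL_{\kappa,\kappa}$ restricts to the same property for $\bL(\WF)$; the forward direction therefore follows directly from Fact \ref{fact:BoneyHuge}.

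For the backward direction, I would invoke the lemma immediately preceding the theorem: it suffices to produce, for every transitive $M$ with $\lambda \in M$ and $M^\kappa \subseteq M$, a transitive $N$ and an elementary embedding $j : M \to N$ with $\crit(j) = \kappa$, $j(\kappa) = \lambda$, and $j``\lambda \in N$. Introducing constants $c_x$ for $x \in M$ and two fresh constants $c$ and $d$, I would consider
\[
T = \text{ElDiag}_{\bL(\WF)}(M) \cup \{c_\alpha < c < c_\kappa : \alpha < \kappa\} \cup \{c_\alpha \in d \wedge |d| = c_\kappa : \alpha < \lambda\}
\]
together with the type $p(x) = \{x \in d \cup c\} \cup \{x \neq c_\alpha : \alpha < \lambda\}$. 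Including $\WF$ applied to the $\in$-relation in the elementary diagram forces any model to have well-founded membership, so after transitive collapse such a model serves as $N$; the shape of $p$ is designed so that omitting it forces both $c^N \subseteq \{c_\alpha^N : \alpha < \lambda\}$, which controls the critical point, and $d^N = j``\lambda$.

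I would then write $T = \bigcup_{s \in [\lambda]^\kappa_*} T_s$ and $p = \bigcup_{s \in [\lambda]^\kappa_*} p_s$ in the natural index-respecting way, keeping in $T_s$ and $p_s$ only the constants and axioms indexed by ordinals in $s$. The central observation is that whenever $s \cap \kappa$ is a limit ordinal strictly below $\kappa$, the structure $M$ expanded with $c^M = s \cap \kappa$ and $d^M = s$ (which belongs to $M$ by closure of $M$ under $\kappa$-sequences) is a model of $T_s$ omitting $p_s$. To see that this set of good indices contains a strong $\kappa$-club, I would use $F : [\lambda]^2 \to \mathcal P_\kappa \lambda$ defined by $F(x,y) = x + 2$ when $x < \kappa$ and $F(x,y) = \emptyset$ otherwise: closure under $F$ forces $s \cap \kappa$ to be a limit ordinal, while $\ot(s) = \kappa$ keeps it strictly below $\kappa$.

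Applying $[\lambda]^\kappa_*$-$\kappa$-compactness for type omission to the resulting data yields a model $N$ of $T$ omitting $p$, and I set $j : M \to N$ by $c_x^M \mapsto c_x^N$. The main obstacle, and the precise role of the omitted type, is pinning the critical point and $j``\lambda$ down exactly: if $\crit(j) = \alpha < \kappa$, then $\alpha + 1$ would belong to $c^N$ but differ from every $c_\beta^N$, realising $p$; and any element of $d^N$ outside $j``\lambda$ would similarly realise $p$. So $\crit(j) = \kappa$ and $d^N = j``\lambda$, and the internal identity $|d| = c_\kappa$ combined with $\ot(j``\lambda) = \lambda$ (computed correctly because $N$ is transitive) forces $j(\kappa) = \lambda$, verifying all the hypotheses of the lemma.
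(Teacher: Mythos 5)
Your proposal is correct and takes essentially the same route as the paper's proof: the same theory $T$ and type $p(x)$, the same index-respecting decomposition into $T_s$ and $p_s$ over $s \in [\lambda]^\kappa_*$, the same expansion of $M$ by $c^M = s\cap\kappa$ and $d^M = s$, and the same function $F(x,y)=x+2$ producing the strong $\kappa$-club, with the critical point, $d^N = j``\lambda$, and $j(\kappa)=\lambda$ all pinned down exactly as in the paper. The only (welcome) difference is that you make explicit the role of the $\WF$ quantifier applied to $\in$ in guaranteeing that the resulting model is well-founded and can be collapsed, a point the paper leaves implicit.
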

	Let us mention some related results. In \cite[Theorem 12]{hay2022}, Hayut and Magidor show that supercompact cardinals can be characterized by $\mathcal P_\kappa \lambda$-$\kappa$-compactness for type omission of first-order logic. Our result is the analogue for type omission compactness indexed by $[\lambda]^\kappa_*$ in the stronger logic $\bL(Q^{WF})$. We do not know whether the same is true for huge type omission.
	\begin{question}
		If first-order logic is $[\lambda]^\kappa_*$-$\kappa$-compact for type omission, is $\kappa$ huge with target $\lambda$?
	\end{question}

We also provide a simpler proof of the difficult direction of \cite[Theorem 12]{hay2022}\footnote{Note that the parameterization is slightly off, but the global result (cf. \cite[Corollary 15]{hay2022}) remains the same.}.  
    
    \begin{fact}
    Suppose that first-order logic $\bL$ is $\cP_\kappa\lambda$-$\kappa$-compact for type omission\footnote{\cite{hay2022} call this $\kappa-\cL_\kappa,\kappa$ compactness with type omission.}.  Then $\kappa$ is $\lambda$-supercompact.
    \end{fact}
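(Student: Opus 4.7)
The plan is to adapt the huge-cardinal construction from Theorem~\ref{thm:hugeTypeOmissionWF} to first-order logic and then extract a fine, normal, $\kappa$-complete ultrafilter on $\cP_\kappa\lambda$ directly from the type-omitting model, thereby bypassing the need for well-foundedness of the target. Fix $\gamma$ large enough that $V_\gamma$ is closed under ${<}\kappa$-sequences and contains $\cP_\kappa\lambda$. In the language obtained from $\{\in\}$ by adding constants $\{c_x : x \in V_\gamma\}$ and two new constants $c,d$, set
\[
T = \text{ElDiag}_\bL(V_\gamma) \cup \{c_\alpha \in c : \alpha < \kappa\} \cup \{c \in c_\kappa\} \cup \{c_\alpha \in d : \alpha < \lambda\} \cup \{|d| < c_\kappa\}
\]
and $p(x) = \{x \in c \cup d\} \cup \{x \neq c_\alpha : \alpha < \lambda\}$. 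View $T = \bigcup_{s \in \cP_\kappa\lambda} T_s$ as an increasing union respecting the index, where the piece $T^\alpha$ consists of the fixed finitary axioms together with $c_\alpha \in c$ (when $\alpha < \kappa$) and $c_\alpha \in d$ (when $\alpha < \lambda$).

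Next, define $F \colon [\lambda]^2 \to \cP_\kappa\lambda$ by $F(x,y) = x$ when $x < \kappa$ (viewing $x$ as the set of ordinals below it) and $F(x,y) = \emptyset$ otherwise. For $s \in C(F)$ the set $s \cap \kappa$ is downward closed in $\kappa$ and of size $<\kappa$, so equals some ordinal $\beta_s < \kappa$. Expanding $V_\gamma$ by $c^{V_\gamma} = \beta_s$ and $d^{V_\gamma} = s$ gives a model of $T_s$ omitting $p_s$: every ordinal below $\beta_s$ lies in $s$ and is thus named, and similarly for elements of $s$. Applying $\cP_\kappa\lambda$-$\kappa$-compactness for type omission then produces a model $N$ of $T$ omitting $p$; let $j \colon V_\gamma \to N$ be the elementary map $x \mapsto c_x^N$.

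From $T$ and $p$ I extract two structural facts: (i) $d^N = j``\lambda$ (the forward inclusion from the theory, the reverse from the omission); and (ii) for each $\gamma_0 < \kappa$, the $N$-elements of $c_{\gamma_0}^N$ are exactly $\{c_\alpha^N : \alpha < \gamma_0\}$, since any such element lies in $c^N$ (because $c_{\gamma_0}^N \in c^N$ in $N$), is named by some $c_\alpha^N$ via omission of $p$, and by elementarity with $c \in c_\kappa$ must satisfy $\alpha < \kappa$, hence $\alpha < \gamma_0$. Now define $U \subseteq \cP(\cP_\kappa\lambda)$ by $X \in U$ iff $N \models d \in c_X$. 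That $U$ is a proper ultrafilter is routine; fineness is immediate from $c_\alpha \in d$; and normality follows as in the huge case, since $j(f)(d) \in d^N = j``\lambda$ must equal $j(\alpha_0)$ for a unique $\alpha_0 < \lambda$, witnessing that $\{s : f(s) = \alpha_0\} \in U$.

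The main obstacle is $\kappa$-completeness, because $N$ need not be well-founded and the usual $\crit(j) = \kappa$ argument is unavailable. Given $\gamma_0 < \kappa$ and $\{X_i : i < \gamma_0\} \subseteq U$, set $f(i) = X_i$; by elementarity, $N \models d \in j(\bigcap_i X_i)$ iff $N \models \forall i \in c_{\gamma_0} (d \in j(f)(i))$. Fact (ii) pins down the external content of $c_{\gamma_0}^N$ as $\{c_\alpha^N : \alpha < \gamma_0\}$, so this internal universal quantifier reduces externally to $d \in j(f)(c_\alpha^N) = j(X_\alpha)$ for each $\alpha < \gamma_0$, which holds by hypothesis. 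Hence $U$ is $\kappa$-complete, and $\kappa$ is $\lambda$-supercompact. The omitted type thus furnishes, without any well-foundedness of $N$, the external control over $j$ on ordinals below $\kappa$ needed to simulate the standard completeness argument.
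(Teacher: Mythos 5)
Your proof is correct, but it takes a genuinely different route from the paper's. The paper works with the structure $\langle \cP(\cP_\kappa\lambda), \cP_\kappa\lambda, \in, X\rangle_{X\subset\cP_\kappa\lambda}$, arranges $[t]\in U$ for \emph{every} $t\in\cP_\kappa\lambda$ via the theory $T^1$, and uses one omitted type $p_F$ for each function $F\colon\cP_\kappa\lambda\to\lambda$ to obtain normality directly; $\kappa$-completeness is then not argued from the model at all, but extracted from the folklore fact that a fine normal ultrafilter on $\cP_\kappa\lambda$ is automatically $\kappa$-complete --- that is precisely the advertised simplification. You instead transplant the huge-cardinal argument of Theorem \ref{thm:hugeTypeOmissionWF}: you take the full elementary diagram of a rank initial segment, use a \emph{single} omitted type to force $d^N=j``\lambda$ (whence normality and fineness), and then prove $\kappa$-completeness directly by using the same type, through the auxiliary constant $c$, to pin down the $N$-elements of $j(\gamma_0)$ for $\gamma_0<\kappa$ as exactly $\{j(\alpha):\alpha<\gamma_0\}$, so that the internal bounded quantifier in $d\in j\bigl(\bigcap_{i<\gamma_0}X_i\bigr)$ can be evaluated externally without any well-foundedness of $N$. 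What each approach buys: yours stays within the single-type form of compactness for type omission as literally defined in Section \ref{sec:huge} (the paper's proof omits a proper family of types $p_F$ simultaneously), and it makes explicit the mechanism that replaces the unavailable ``$\crit(j)=\kappa$'' step; the paper's is shorter because the completeness step is outsourced to a standard ultrafilter fact and requires no control over $j$ below $\kappa$.

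One small repair in your fact (ii): from ``$a=c_\alpha^N$ and $\alpha<\kappa$'' it does not follow that $\alpha<\gamma_0$. The correct step is that $a=c_\alpha^N\in^N c_{\gamma_0}^N$ gives $V_\gamma\models\alpha\in\gamma_0$ by elementarity (both sides are named by constants), hence $\alpha<\gamma_0$; the detour through $c\in c_\kappa$ is only needed to see that $c^N$ is a transitive $N$-ordinal, so that elements of $c_{\gamma_0}^N$ land in $c^N$ and the type applies. With that one-line fix the argument goes through.
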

    
    Hayut and Magidor prove this by using the compactness to show that all relevant transitive models $M$ can be embedded into an extension $N$ that would witness the supercompactness {\bf except} that $N$ is not required to be transitive, and then using that statement to prove that you can find a transitive $N$ \cite[Claim 13]{hay2022}.  We bypass this by building a normal fine ultrafilter on $\cP_\kappa \lambda$, and then such an ultrafilter must be $\kappa$-complete.

    \begin{proof} We follow the standard argument to model-theoretically built a normal ultrafilter (cf., e.g., \cite[Theorem 3.5, (1) implies (3)]{bon2020}).  Fix the structure and language
    \begin{eqnarray*}
    M &=& \langle\cP(\cP_\kappa\lambda),  \cP_\kappa\lambda, \in, X \rangle_{X \subset \cP_\kappa\lambda}\\
    \tau &=& \langle P, Q, E, c_X, d\rangle_{X \subset \cP_\kappa \lambda}
    \end{eqnarray*}
    where $M$ is a $(\tau-\{d\})$-structure.  Then we can define the following theories, sets, and types given $F: \cP_\kappa\lambda \to \lambda$ (recall $[t] = \{s \in \cP_\kappa \lambda : t \subset s\}$ for $t \in \cP_\kappa\lambda$):
    \begin{eqnarray*}
    T^0&:=& Th_\bL(M)\\
    T^1 &:=& \{\text{`}d E c_{[t]}\text{'} : t \in \cP_\kappa \lambda\}\\
    T_s &:=& T^0 \cup \{\text{`}d E c_{[t]}\text{'} : t \subset s\}\\
    X_F &:=& \{s \in \cP_{\kappa} \lambda: F(s) \in s\}\\
    X_{F, \alpha} &:=& \{s\in \cP_\kappa\lambda : F(s) = \alpha\}\\
    p_F(x) &:=& \left\{ \text{`} x = d \wedge x E c_{X_F} \wedge \neg (x E c_{X_{F, \alpha}}) \text{'}: \alpha < \lambda  \right\}
    \end{eqnarray*}
    From this, for each $s \subset \lambda$, we have
    \begin{eqnarray*}
        T^0\cup T^1 &=& \bigcup_{s \in \cP_\kappa\lambda} T_s\\
        \text{`$x \in N$ omits $p_{F, s}(x)$'} & \iff &  N \vDash \text{`}d E c_{X_F} \to \bigvee_{\alpha \in s}d E c_{X_{F, \alpha}}\text{'} 
    \end{eqnarray*}
    This set up gives us an opportunity to apply our compactness for type omission.  For each $s \in \cP_\kappa \lambda$, we wish to find a model $N_s$ of $T_s$ that omits $p_{F, s}$.  As in \cite[Theorem 3.5]{bon2020}, this is given by $(M, s)$.

    By our compactness for type omission, there is a model $N$ of $T^0 \cup T^1$ that omits $p_F(x)$ for each $F:\cP_\kappa\lambda \to \lambda$.  By standard arguments,
    $$U = \left\{X \subset \cP_\kappa\lambda : N \vDash\text{`}d E c_X\text{'} \right\}$$
    is an ultrafilter on $\cP_\kappa \lambda$ so each $[s] \in U$, and the omission of the $p_F$ types implies that $U$ is normal.  Taken together, folklore results imply that $U$ is $\kappa$-complete.  Thus, $\kappa$ is $\lambda$-supercompact.
    \end{proof}

    An important point is that the ultrafilter $U$ contains not just $[\{\alpha\}]$ for each $\alpha \in \cP_\kappa \lambda$ but also $[s]$ for each $s \in \cP_\kappa \lambda$, which is the set the ultrafilter is on.  Then this allows the choice $d = s$ to omit each type.

    This fails when moving to huge cardinals: to imitate this argument, we would want to have $[s] \in U$ for each $s \in [\lambda]^\kappa$.  However, because these $s$ are of size $\kappa$, this is much harder: if $U$ is an ultrafilter derived from a huge embedding, then in fact $[s] \not\in U$.

\footnotei{WB: I thought we had a weak version of this?  Let me find it and see if I can write it up\\
`Notes, \S 2' is where I pointed myself}	
	
	\section{Compactness of $\bL(I)$}\label{sec:compactnessLI}
	
	Magidor and Väänänen showed in \cite{mag2011} that it is consistent that the so called \emph{LST} number of the logic $\bL(I)$ is the first supercompact cardinal. We show that in the model they produce, if the compactness number of $\bL(I)$ exists, it is larger than the first supercompact cardinal.\footnote{After this result was obtained, that it is consistent that the compactness number of $\bL(I)$ is larger than the first supercompact cardinal was also proved by Gitman and the second author in \cite{gitman2024upward} using different methods.}
	
	We will actually work with the logic $\bL(\WF,I)$, which is justified by the following proposition.
	\begin{proposition}
		$\comp(\bL(\WF, I)) = \comp(\bL(I))$.
		\begin{proof}
			Clearly $\comp(\bL(I)) \leq \comp(\bL(\WF,I))$. The so-called \emph{$\Delta$-closure} $\Delta(\bL(I))$ of the logic $\bL(I)$ can define well-foundedness (cf., e.g., \cite[§4.1]{bagaria2016symbiosis}), and so $\comp(\bL(\WF,I)) \leq \comp(\Delta(\bL(I)))$. But it is known that the $\Delta$-closure preserves compactness numbers (cf., e.g., \cite[p. 72]{bar1985}, or -- for a proof -- \cite[Lemma 4.1.9]{osinski2021symbiosis}), and so $\comp(\bL(I)) = \comp(\bL(\WF, I)) = \comp(\Delta(\bL(I)))$. 
		\end{proof}
	\end{proposition}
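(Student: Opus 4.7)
The plan is to establish the equality by sandwiching $\comp(\bL(\WF, I))$ between $\comp(\bL(I))$ below and $\comp(\Delta(\bL(I)))$ above, then invoking the well-known fact that $\Delta$-closure preserves compactness numbers to collapse the sandwich. This is a short argument that hinges on citing the right facts about the $\Delta$-closure operation on abstract logics.

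First, I would dispense with the trivial inequality $\comp(\bL(I)) \leq \comp(\bL(\WF, I))$: any $\bL(I)$-theory is an $\bL(\WF, I)$-theory, so a compactness bound for the larger logic immediately yields one for the smaller. For the reverse direction, I would recall that Bagaria and Väänänen's work on symbiosis (for instance \cite[§4.1]{bagaria2016symbiosis}) shows that $\Delta(\bL(I))$ can express well-foundedness of a definable binary relation; hence every sentence of $\bL(\WF, I)$ is (up to logical equivalence) a sentence of $\Delta(\bL(I))$, giving $\comp(\bL(\WF, I)) \leq \comp(\Delta(\bL(I)))$. Finally I would cite the folklore fact that the $\Delta$-closure of an abstract logic has the same compactness number as the logic itself (see \cite[p.~72]{bar1985} or \cite[Lemma 4.1.9]{osinski2021symbiosis}), so $\comp(\Delta(\bL(I))) = \comp(\bL(I))$. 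Chaining the three inequalities forces equality throughout.

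The only step that could be considered an obstacle is making sure the two cited results about $\Delta$-closure are being applied correctly: namely, that $\WF$ really does belong to $\Delta(\bL(I))$ (this requires expressing well-foundedness as both a $\Sigma$- and $\Pi$-definable property over $\bL(I)$, which is available via the Härtig quantifier's ability to compare cardinalities of initial segments), and that the preservation-of-compactness result applies to $\bL(I)$ without any extra hypotheses. Since both ingredients are standard and already in the literature, no genuinely new work is required, and the proof is essentially a two-line citation chase.
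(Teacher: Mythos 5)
Your proposal is correct and follows exactly the same route as the paper: the trivial sublogic inequality, the observation that $\Delta(\bL(I))$ defines well-foundedness to get $\comp(\bL(\WF,I)) \leq \comp(\Delta(\bL(I)))$, and the preservation of compactness numbers under $\Delta$-closure to close the loop. Nothing is missing.
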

	We will use some results by Magdior and Väänänen connecting model theory of $\bL(\I)$ to combinatorial principles. More concretely, they reprove results on \emph{good scales} from Shelah's pcf theory (cf. \cite{shelah1994}) for weaker notions tailored to their applications. For this, if $f,g \in \Ord^\omega$, we write
	\[
	f <^* g \text{ iff } f(n) < g(n) \text{ for all but finitely many } n \in \omega.
	\]
	\begin{definition}[{\cite[Definition 12]{mag2011}}]\label{def:goodSeq}
		Let $(f_\alpha \colon \alpha < \mu)$ be a $<^*$-increasing sequence of $f_\alpha \in \Ord^{\omega}$. We make the following conventions.
		\begin{enumerate}
			\item[(a)] An ordinal $\delta \in \mu$ is called a \emph{good point of the sequence $(f_\alpha \colon \alpha < \mu)$} iff there is a cofinal set $C \subseteq \delta$ and a function $C \rightarrow \omega$, $\alpha \mapsto n_\alpha$, such that $\alpha < \beta$ in $C$ and $k > \max(n_\alpha, n_\beta)$ implies $f_\alpha(k) < f_\beta(k)$.
			\item[(b)] The sequenece $(f_\alpha \colon \alpha < \mu)$ is called a \emph{good} if there is a club subset $D \subseteq \mu$ such that all members of $D$ are good points of the sequence. 
		\end{enumerate}
	\end{definition}
	\begin{fact}[Magidor \& Väänänen ({\cite[Theorem 15]{mag2011}})]\label{fact:goodseqLSTLI}
		If $\kappa = \LST(\bL(\I))$, then there is no good sequence $(f_\alpha \colon \alpha < \lambda^+)$ of functions $f: \omega \rightarrow \lambda$ for any $\lambda \geq \kappa$ with $\cof(\lambda) = \omega$. 
	\end{fact}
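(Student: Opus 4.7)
The plan is a proof by contradiction. Assume $\kappa = \LST(\bL(\I))$ and let $(f_\alpha : \alpha < \lambda^+)$ be a good sequence of functions into $\lambda$ with $\lambda \geq \kappa$ and $\cof(\lambda) = \omega$. Fix a cofinal $\omega$-sequence $\langle \lambda_n \rangle \nearrow \lambda$, a club $D \subseteq \lambda^+$ of good points, and, for each $\delta \in D$, a cofinal $C_\delta \subseteq \delta$ with its goodness function $\alpha \mapsto n_\alpha^\delta$. The strategy is to encode all of this into a single structure $\mathcal{A}$ and an $\bL(\I)$-sentence $\varphi$ so that $\LST$ produces a small submodel $\mathcal B$ in which the cardinality-sensitive Härtig quantifier forces the existence of an internal good point whose goodness witness is too big to be compatible with the $\omega$-cofinality of $P^{\mathcal B}$.

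Concretely, I would build $\mathcal{A}$ with universe $\lambda^+$, equipped with $<$, unary predicates $P$ for $\lambda$ and $Q$ for $\lambda^+\setminus\lambda$, constants $c_n$ for $\lambda_n$, the graph $F$ of $(\alpha,n)\mapsto f_\alpha(n)$, a unary predicate $E$ for $D$, and further relations encoding the maps $\delta\mapsto C_\delta$ and $(\delta,\alpha)\mapsto n_\alpha^\delta$. The sentence $\varphi$ would conjoin (i) $<$ linearly orders the universe with $P$ an initial segment and $\langle c_n \rangle$ cofinal in $P$; (ii) the $f_\alpha$ are $<^*$-increasing functions $\omega \to P$ indexed by $Q$; (iii) at each $\delta \in E$ the relations code a cofinal $C_\delta \subseteq \{y < \delta\}$ with a goodness function; and (iv) the Härtig clauses $|P| \neq |Q|$ together with ``for every initial segment $\{x < y\}$ of the universe, $|\{x < y\} \cap P| = |P|$''. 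Apply $\LST$ to get $\mathcal{B} \subseteq \mathcal{A}$ with $|B| < \kappa$ and $\mathcal{B} \models \varphi$, and set $\mu := |P^{\mathcal{B}}|$ and $\nu := |Q^{\mathcal{B}}|$. The Härtig clauses and the ordering force $\mu < \nu$, hence $\mu^+ \leq \nu$, while $P^{\mathcal{B}}$ has genuine cofinality $\omega$ via the constants $c_n^{\mathcal{B}}$.

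Now let $\delta_* := \sup(B \cap \lambda^+) \in \lambda^+$. Since $D$ is a club in $V$ and, by the unboundedness content of $\varphi$ read externally, $\delta_*$ is a limit of $D$-points, we have $\delta_* \in D$. The internal goodness data at $\delta_*$ yields a sequence of $<^*$-increasing $f_\alpha$'s indexed by $C_{\delta_*} \cap B$, all taking values in $P^{\mathcal{B}}$. Since $P^{\mathcal{B}}$ has size $\mu$ and true cofinality $\omega$ (witnessed by $c_n^{\mathcal{B}}$), each such $f_\alpha$ is eventually bounded by some $c_n^{\mathcal{B}}$; pigeonholing the $\nu \geq \mu^+$ many such functions across the $\omega$-many tail bounds and the resulting $\mu$-sized pieces of $P^{\mathcal{B}}$ forces two indices $\alpha < \beta$ in $C_{\delta_*} \cap B$ whose $f_\alpha, f_\beta$ fail to be strictly increasing past $\max(n_\alpha^{\delta_*}, n_\beta^{\delta_*})$, contradicting goodness at $\delta_*$.

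The main obstacle is making the pigeonhole genuinely work: without the $\omega$-cofinality of $P^{\mathcal{B}}$ one could accommodate a $<^*$-increasing sequence of length $\mu^+$ in $\mu^\omega$, so the hypothesis $\cof(\lambda) = \omega$ is essential and must be leveraged by splitting $P^{\mathcal{B}}$ into $\omega$-many bounded pieces via the $c_n^{\mathcal{B}}$. A secondary subtlety is phrasing ``$D$ is a club'' and ``$C_\delta$ is a cofinal goodness witness'' in raw $\bL(\I)$ so that what descends into $\mathcal{B}$ still captures the combinatorial essence: the Härtig quantifier has to do all the cardinal-counting work, since $\bL(\I)$ cannot natively express closure in the order, and the unboundedness of $E$ must be phrased via cardinality (e.g., ``no subset equinumerous with $P$ covers $E$'') rather than via the order topology.
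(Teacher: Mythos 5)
Note first that the paper does not prove this statement; it is quoted as a Fact from Magidor--V\"a\"an\"anen \cite[Theorem 15]{mag2011}, so your attempt has to be measured against their argument. Your skeleton matches it: code the good sequence, the club $D$ of good points, and a cofinal $\omega$-sequence in $\lambda$ into one structure on $\lambda^+$; use H\"artig clauses to force any substructure $\mathcal B \models \varphi$ to satisfy $|Q^{\mathcal B}| = |P^{\mathcal B}|^+ = \mu^+$; and observe that $\delta_* = \sup(B \cap \lambda^+)$ lies in $D$ because $E^{\mathcal B}$ is unbounded in $B$ and $D$ is closed in $V$. (Three small repairs: the clause ``$|\{x<y\} \cap P| = |P|$ for every initial segment'' is false in $\mathcal A$ for $y \in P$ and should be the standard pair ``no proper initial segment is equinumerous with the universe'' plus ``every initial segment above $P$ is equinumerous with $P$'', exactly as in the paper's own Lemma \ref{lem:goodseqLI}; unboundedness of $E$ is plainly first-order, so no cardinality trick is needed there; and the Fact's ``$f:\lambda\to\omega$'' is a typo for $f_\alpha:\omega\to\lambda$, which you correctly read.)

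The endgame, however, has a genuine gap. The goodness witness at $\delta_*$ is not ``internal'': $\delta_* \notin B$ (it is the supremum of $B$, and $Q^{\mathcal B}$ has limit order type), so the relations coding $C_{\delta_*}$ and $\alpha \mapsto n_\alpha^{\delta_*}$ are invisible to $\mathcal B$, and there is no reason for $C_{\delta_*} \cap B$ to be cofinal in $\delta_*$, or even nonempty; the goodness data you actually inherit inside $\mathcal B$ lives at points $\delta \in D \cap B$, where $B \cap \delta$ has size only $\leq \mu$ and yields nothing. Moreover the pigeonhole is false as stated: a function $f_\alpha : \omega \to P^{\mathcal B}$ need not be eventually bounded by any single $c_n^{\mathcal B}$ (in a scale the $f_\alpha$ are typically cofinal in $P^{\mathcal B}$), and two functions landing in the same $\mu$-sized piece can perfectly well be pointwise comparable, so no failure of goodness follows. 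The correct contradiction runs the other way around: the H\"artig clauses force not merely $|Q^{\mathcal B}| = \mu^+$ but $\ot(Q^{\mathcal B}) = \mu^+$, hence $\cof(\delta_*) = \mu^+ > \omega$; since $\delta_*$ is a good point of uncountable cofinality, the standard interleaving/uniformization lemma applied to the cofinal set $B \cap \lambda^+$ produces $A' \subseteq B \cap \lambda^+$ of size $\mu^+$ and a single $n^*$ such that $f_\alpha(k) < f_\beta(k)$ for all $\alpha < \beta$ in $A'$ and $k \geq n^*$; then $\alpha \mapsto f_\alpha(n^*)$ injects $A'$ into $P^{\mathcal B}$, giving $\mu^+ \leq \mu$. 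In short, you must \emph{use} goodness at $\delta_*$ (together with the uncountable cofinality that the H\"artig quantifier buys you) to build an injection, not attempt to refute it; your write-up neither establishes $\cof(\delta_*) > \omega$ nor supplies the interleaving step, and these are the two places where all the work of the proof is concentrated.
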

	Recall that the \emph{Singular Cardinal Hypothesis} (SCH) is the statement:
	\begin{quote}
		If $\lambda$ is singular and $2^{\cof(\lambda)} < \lambda$, then $2^\lambda = \lambda^+$.
	\end{quote}
	Failure of SCH is connected to the existence of good sequences.
	\begin{fact}[Shelah \cite{shelah1994}]\label{fact:ShelahSCHgoodseq}
		If $\lambda$ is a singular cardinal of $\cof(\lambda) = \omega$ such that SCH fails at $\lambda$, then there is a good sequence $(f_\alpha \colon \alpha < \lambda^+)$ of functions $f_\alpha: \omega \rightarrow \lambda$.
        \begin{proof}
             Cf. {\cite[Lemma 17]{mag2011}} for a concise proof in the setting presented here.
        \end{proof}
	\end{fact}
	We show that the existence of good sequences can be expressed by a sentence of $\bL(\WF, \I)$.
	\begin{lemma}\label{lem:goodseqLI}
		There is a vocabulary $\tau$ and a sentence $\varphi_{\text{good}} \in \bL(\WF, \I)[\tau]$ such that for any set $M$, the following are equivalent:
		\begin{enumerate}
			\item[(1)] $|M|$ is a successor cardinal $\lambda^+$ such that $\cof(\lambda) = \omega$ and there is a good sequence $(f_\alpha \colon \alpha < \lambda^+)$ of functions $f_\alpha: \omega \rightarrow \lambda$.
			\item[(2)] There is a $\tau$-structure $\mathcal M$ with universe $M$ such that $\mathcal M \models \varphi_{\text{good}}$. 
		\end{enumerate}
		\begin{proof}
			Consider the vocabulary $\{E, F, G, D, C, S \}$, where $D$ is unary, $E, G, C$ are binary, and $F$ and $S$ are ternary. Then let $\varphi_{\text{good}}$ be the conjunction of the following sentences, where (i) uses $\WF$ for the well-foundedness assertion:
			\begin{enumerate}
				\item[(i)] ``$E$ is a well-order without a largest element."
				\item[(ii)] $\ot(E)$ is a cardinal: $\neg \exists x \I yz (y E x, z=z)$.
				\item[(iii)] There is a largest cardinal $\lambda$: \[\exists \lambda\forall x((xE\lambda \rightarrow \neg \exists w Iyz(yEw, zE\lambda)) \wedge (\lambda E x \rightarrow \I yz(yE\lambda,zEx))).\]
				\item[(iv)] ``$G(\cdot, \cdot)$ is a function with domain the smallest limit ordinal (i.e., $\omega$) into $\lambda$ with unbounded range."
				\item[(v)] ``For every $\alpha$, $F(\alpha, \cdot,\cdot)$ is a function with domain $\omega$ and range $\subseteq \lambda$."
				\item[(vi)] ``If $\alpha < \beta$, there is an $n < \omega$ such that $\forall m \geq n$: $F(\alpha,m) < F(\beta, m)$."
				\item[(vii)] ``$D$ is a club subset of the model and $\forall \delta \in D$, $C(\delta,\cdot)$ is good, i.e., $C(\delta, \cdot)$ is a club subset of $\delta$ and $S(\delta, \cdot, \cdot)$ is a function with domain $C(\delta, \cdot)$  and range $\omega$ such that $\forall \alpha < \beta$ both in $C(\delta,\cdot)$: $\forall k > \max(S(\delta, \alpha), S(\delta, \beta))(F(\alpha, k) < F(\beta, k))$."
			\end{enumerate}
			Then $\varphi_{\text{good}}$ is as desired. For (vii), note that we can express that, for example, $D$ is club by saying: 
			\[
			\begin{tabular}{ l l }
				$\forall x \exists y (D(y) \wedge x E y)$ & (``$D$ is unbounded") \\
				$\forall x (\forall y(yEx \rightarrow \exists z(yEz \wedge zEx \wedge D(z)) \rightarrow D(x))$ & (``$D$ is closed")
			\end{tabular}
			\]
		\end{proof}
	\end{lemma}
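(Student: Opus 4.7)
The plan is to exhibit a vocabulary $\tau$ and a single sentence $\varphi_{\text{good}} \in \bL(\WF, \I)[\tau]$ whose models on domain $M$ are exactly the structures encoding the combinatorial data of clause (1). First I would fix $\tau$ to contain: a binary symbol $E$ intended to well-order $M$; a ternary symbol $F$ encoding the sequence via $F(\alpha,n) = f_\alpha(n)$; a unary symbol $D$ naming the club of good points; a binary symbol $C$ so that $C_\delta := \{\alpha : C(\delta,\alpha)\}$ is the cofinal witness at $\delta$; and a ternary symbol $S$ encoding the auxiliary map $n_{\cdot} : C_\delta \to \omega$ from Definition \ref{def:goodSeq}.

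Second, I would assemble $\varphi_{\text{good}}$ as a finite conjunction. Well-foundedness of $E$ is asserted by $\WF\, xy\, (yEx)$ together with first-order linearity, forcing $(M,E)$ to have some ordinal order type $\mu$. To force $\mu = |M|$ to be a cardinal I assert $\forall x\, \neg \I yz\, (yEx,\, z=z)$, using $\I$ to say no proper initial segment is equinumerous with $M$. To name $\lambda$ as the largest cardinal below $|M|$ I assert the existence of an element $\ell$ such that every $\alpha$ with $\alpha E \ell$ satisfies $\neg \I yz\, (yE\alpha,\, yE\ell)$ and every $\alpha$ with $\ell E \alpha$ satisfies $\I yz\, (yE\ell,\, yE\alpha)$. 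The condition $\cof(\lambda)=\omega$ becomes: there is a total function from the initial segment determined by the first limit element of $E$ (a first-order notion over $E$) into $\{y : yE\ell\}$ with cofinal range. The remaining clauses declare that each $F(\alpha,\cdot)$ is a function from that initial segment into $\{y : yE\ell\}$, that the family $(F(\alpha,\cdot))$ is $<^*$-increasing in $\alpha$, and that $D$ is a club in $E$ with each $\delta \in D$ good as witnessed by $C(\delta,\cdot)$ and $S(\delta,\cdot)$. All of this is first-order over $E, F, D, C, S$ once $\omega$ has been identified inside $E$.

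Third, the verification of the equivalence is bookkeeping. Given (1), interpret each symbol in the natural way using the given well-order of $M$ of type $\lambda^+$, the functions $f_\alpha$, the club of good points of the sequence, and the cofinal sets together with the associated $n_\cdot$; the conjunction is then satisfied. Conversely, given a structure $\mathcal M$ on $M$ with $\mathcal M \models \varphi_{\text{good}}$, the first three clauses force $|M|$ to be a successor cardinal $\lambda^+$ with $\lambda$ the largest cardinal below, the cofinality clause forces $\cof(\lambda)=\omega$, and the remaining clauses extract a $<^*$-increasing sequence $(f_\alpha)_{\alpha < \lambda^+}$ in $\lambda^\omega$ together with a club of good points.

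I expect the main obstacle to be pinning down $\lambda$ by a single sentence using $\I$: one has to be confident that the Härtig quantifier correctly handles the internal cardinality comparisons and that demanding a witness $\ell$ whose initial segment is the largest cardinal in $(M,E)$ really is expressible by a single formula rather than a scheme. A secondary delicacy is checking that ``cofinal'' and ``club'' are first-order over $E$ (including the identification of the first limit element as a definable constant), but this is routine once $E$ is known to well-order the universe.
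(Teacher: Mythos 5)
Your proposal is correct and follows essentially the same route as the paper: well-order the universe via $\WF$, use the H\"artig quantifier to force $\ot(E)$ to be a cardinal with a largest cardinal $\ell=\lambda$ below it, and add relation symbols $F, D, C, S$ whose first-order axioms over $E$ encode the $<^*$-increasing sequence, the club, and the goodness witnesses. The one slip is that your vocabulary has no witness for $\cof(\lambda)=\omega$ --- ``there is a total function \dots with cofinal range'' is a quantification over functions, not expressible in $\bL(\WF,\I)$ --- so you must add a further symbol $G$ to $\tau$ (as the paper does) and assert in first-order terms that $G$ is a function from the first limit point of $E$ cofinally into $\{y : y\,E\,\ell\}$.
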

	\begin{lemma}\label{lem:compLIgoodseq}
		If for every $\kappa < \delta = \comp(\bL(\WF,\I))$ there is $\lambda \geq \kappa$ of cofinality $\omega$ such that there is a good sequence $(f_\alpha \colon \alpha < \lambda^+)$ of functions $f_\alpha: \omega \rightarrow \lambda$, then there are unboundedly many such $\lambda$ in the ordinals.
		\begin{proof}
			Let $\rho$ be any cardinal. It suffices to show that there is $\gamma \geq \rho$ of cofinality $\omega$ such that there is a good sequence of functions $(f_\alpha \in \gamma^\omega \colon \alpha < \gamma^+)$. Consider the theory $T = \{\varphi_{\text{good}}\} \cup \{c_i \neq c_j \colon i < j < \rho^+ \}$, where the $c_i$ are new constants. Let $T_0 \subseteq T$ be of size $|T_0| < \delta$. Then for some $\kappa < \delta$, there are $\kappa$-many sentences of the form $``c_i \neq c_j"$ appearing in $T_0$. By the assumption take $\lambda \geq \kappa$ such that there is a good sequence $(f_\alpha \in \lambda^\omega \colon \alpha < \lambda^+)$. Then by Lemma \ref{lem:goodseqLI}, $\varphi_{\text{good}}$ has a model with universe $\lambda^+$, so of size $> \kappa$. Clearly, this can be expanded to a model of $T_0$. We showed that $T$ is $< \delta$-satisfiable. Hence, $T$ has a model, which gives rise to a model $M$ of $\varphi_{\text{good}}$ of size $\geq \rho^+$. Then with $|M| = \gamma^+$, $\gamma$ is as desired, by Lemma \ref{lem:goodseqLI}.
		\end{proof}
	\end{lemma}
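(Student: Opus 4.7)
The plan is to exploit the fact that $\bL(\WF, \I)$ is strong enough to axiomatize structures whose universe has a specific successor-cardinal size with the required combinatorial property (via Lemma \ref{lem:goodseqLI}), and combine this with the compactness of $\bL(\WF, \I)$ at $\delta$ to push the model size arbitrarily high while preserving the good-sequence property. Given an arbitrary cardinal $\rho$, the goal is to produce some $\gamma \geq \rho$ with $\cof(\gamma) = \omega$ admitting a good sequence of length $\gamma^+$.

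First I would introduce $\rho^+$ many fresh constants $\{c_i : i < \rho^+\}$ and form the $\bL(\WF, \I)$-theory
\[
T = \{\varphi_{\text{good}}\} \cup \{c_i \neq c_j : i < j < \rho^+\},
\]
where $\varphi_{\text{good}}$ is the sentence supplied by Lemma \ref{lem:goodseqLI}. Next I would verify that $T$ is $<\delta$-satisfiable. Given $T_0 \subseteq T$ with $|T_0| < \delta$, only fewer than $\delta$ many $c_i$ appear in $T_0$, so one can pick some cardinal $\kappa < \delta$ strictly larger than the number of constants appearing in $T_0$. Applying the hypothesis at $\kappa$ yields $\lambda \geq \kappa$ of cofinality $\omega$ admitting a good sequence of length $\lambda^+$, and the backward direction of Lemma \ref{lem:goodseqLI} then produces a structure $\mathcal N$ with $|\mathcal N| = \lambda^+ > \kappa$ satisfying $\varphi_{\text{good}}$. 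Since $\lambda^+$ exceeds the number of constants in $T_0$, one can expand $\mathcal N$ by interpreting those constants as pairwise distinct elements, producing a model of $T_0$.

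By the definition of $\delta = \comp(\bL(\WF, \I))$, the full theory $T$ then has a model $\mathcal M$. The inequalities $c_i \neq c_j$ force $|\mathcal M| \geq \rho^+$, and the forward direction of Lemma \ref{lem:goodseqLI} then says $|\mathcal M| = \gamma^+$ for some $\gamma \geq \rho$ of cofinality $\omega$ that admits a good sequence of functions $\omega \to \gamma$ of length $\gamma^+$. Since $\rho$ was arbitrary, the collection of such $\gamma$ is unbounded in the ordinals.

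The only genuine friction is in the $<\delta$-satisfiability step: one must be careful to extract $\kappa$ as a function of $|T_0|$ rather than of the indices of the constants appearing (which could be cofinal in $\rho^+$), and to exploit that the hypothesis supplies good-sequence witnesses at \emph{arbitrarily large} $\lambda$ below $\delta$, not merely one. Once that bookkeeping is in place, everything else is a routine Henkin-style constant-padding argument riding on Lemma \ref{lem:goodseqLI} and compactness.
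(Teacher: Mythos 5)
Your proposal is correct and follows essentially the same route as the paper: form $T = \{\varphi_{\text{good}}\} \cup \{c_i \neq c_j : i < j < \rho^+\}$, use the hypothesis together with Lemma \ref{lem:goodseqLI} to witness $<\delta$-satisfiability, and then read off $\gamma$ from a model of the full theory via Lemma \ref{lem:goodseqLI} again. (One trivial adjustment: you need not pick $\kappa$ \emph{strictly} larger than the number of constants in $T_0$ --- which might not be possible below $\delta$ --- since taking $\kappa$ equal to that number already yields a model of size $\lambda^+ > \kappa$, enough to interpret the constants distinctly.)
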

	\begin{corollary}\label{cor:clIabovesc}
		If there is an extendible cardinal, then in a forcing extension of the universe it holds that $\comp(\bL(\I))$ is larger than the first supercompact cardinal.
		\begin{proof}
			Let $\eta$ be the smallest extendible cardinal and $\nu$ the smallest supercompact one. We can use Magidor's forcing from \cite[Section 4]{mag1976} to go to a model $N$ in which $\nu$ becomes simultaneously the smallest supercompact and the smallest strongly compact cardinal, by introducing unboundedly many points $(\lambda_i)_{i < \nu}$ below $\nu$ of cofinality $\cof(\lambda_i) = \omega$ such that SCH fails at $\lambda_i$. Because the forcing used has size $< \eta$, $\eta$ remains extendible in $N$. Let us work in $N$. It is known that the smallest supercompact $\nu = \LST(\bL^2)$ (cf. \cite{mag1971}). In particular, $\LST(\bL(\I)) \leq \LST(\bL^2) \leq \nu$. Thus, by Fact \ref{fact:goodseqLSTLI}, for no $\lambda \geq \nu$ of cofinality $\omega$ there is a good sequence $(f_\alpha \colon \alpha < \lambda^+)$ of functions $f_\alpha: \omega \rightarrow \lambda$. Furthermore, by Fact \ref{fact:ShelahSCHgoodseq}, for all $\lambda_i$, $i < \nu$, there is such a good sequence. Thus, using Lemma \ref{lem:compLIgoodseq}, it is impossible that $\comp(\bL(\WF, \I)) \leq \nu$. Because the smallest extendible is equal to $\comp(\bL^2)$ (cf. \cite{mag1971}), also $\comp(\bL(I)) = \comp(\bL(\WF, \I)) \leq \comp(\bL^2)$ exists and is thus larger than $\nu$. 
		\end{proof} 
	\end{corollary}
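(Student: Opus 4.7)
The plan is to produce, by forcing, a model in which the first supercompact cardinal $\nu$ sits strictly below $\comp(\bL(\I))$ while $\comp(\bL(\I))$ still exists, its existence being guaranteed from above by an extendible cardinal via the logic $\bL^2$.

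Starting from a ground model with an extendible $\eta$, with first supercompact $\nu<\eta$, I would apply Magidor's forcing from \cite{mag1976} to pass to an extension $N$ in which $\nu$ is simultaneously the first supercompact and the first strongly compact and, crucially, there exists an unbounded-in-$\nu$ sequence of singular cardinals $\lambda_i<\nu$ of cofinality $\omega$ at each of which the Singular Cardinal Hypothesis fails. Since this forcing has size less than $\eta$, the extendibility of $\eta$ is preserved in $N$. By Magidor \cite{mag1971}, $\comp(\bL^2)$ then exists in $N$; combining this with the trivial bound $\comp(\bL(\WF,\I))\leq\comp(\bL^2)$ and the preceding proposition equating $\comp(\bL(\I))$ with $\comp(\bL(\WF,\I))$, the compactness number $\comp(\bL(\I))$ also exists in $N$.

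To locate $\comp(\bL(\I))$ strictly above $\nu$, I would first use $\nu=\LST(\bL^2)\geq\LST(\bL(\I))$ together with Fact \ref{fact:goodseqLSTLI} to rule out any good sequence $(f_\alpha\colon\alpha<\lambda^+)$ of functions $\omega\to\lambda$ for $\lambda\geq\nu$ with $\cof(\lambda)=\omega$. Meanwhile, Fact \ref{fact:ShelahSCHgoodseq} produces such a good sequence at every $\lambda_i$, giving unboundedly many good-sequence witnesses below $\nu$. Now, if $\comp(\bL(\WF,\I))$ were $\leq\nu$, Lemma \ref{lem:compLIgoodseq} would force good sequences to recur cofinally in the ordinals, contradicting the absence of such sequences above $\nu$ just established. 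Hence $\comp(\bL(\I))=\comp(\bL(\WF,\I))>\nu$.

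The main obstacle in this argument is already encapsulated in the preparatory results: the substantive ingredient is Magidor's forcing construction, which one must verify delivers exactly the required pattern of SCH failures cofinal in $\nu$ of cofinality $\omega$ while keeping $\nu$ supercompact and $\eta$ extendible. Once that setup is in hand, the rest is a clean three-step combination, (i) supercompactness-to-small-LST-number, (ii) SCH-failure-to-good-sequence (Shelah), (iii) good-sequence-transfer (Lemma \ref{lem:compLIgoodseq}), each of which is already available off the shelf.
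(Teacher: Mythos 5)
Your proposal is correct and follows essentially the same route as the paper's proof: Magidor's forcing to arrange cofinally many SCH failures below the first supercompact $\nu$ while preserving the extendible $\eta$, then Facts \ref{fact:goodseqLSTLI} and \ref{fact:ShelahSCHgoodseq} together with Lemma \ref{lem:compLIgoodseq} to push $\comp(\bL(\WF,\I))$ above $\nu$, with existence guaranteed by $\comp(\bL^2)$. No substantive differences.
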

	
	\section{Upward LST numbers of second-order and sort logic}\label{sec:ULST}
	
	Motivated by the LST number, Galeotti, Khomskii, and Väänänen considered upward Löwenheim-Skolem properties of logics \cite{gal2020}. 
	\begin{definition}[Galeotti, Khomskii \& Väänänen \cite{gal2020}]
		The \emph{upward Löwenheim-Skolem-Tarski} (ULST) number $\ULST(\cL)$ of a logic $\cL$, should it exist, is the smallest cardinal $\kappa$ such that for any language $\tau$ and $\varphi \in \cL[\tau]$, if there is a $\tau$-structure $\mathcal A$ satisfying $\varphi$ and such that $|A| \geq \kappa$, then for any cardinal $\lambda$ there is a superstructure $\mathcal B \supseteq \mathcal A$ of size $|B| \geq \lambda$ satisfying $\varphi$. 
	\end{definition}
	This is a strengthening of the classically studied \emph{Hanf number}, which omits the superstructure condition. The Hanf number does not carry any large cardinal strength (cf. \cite[Chapter II, Theorem 6.1.4]{bar1985}). On the contrary, Galeotti, Khomskii, and Väänänen showed that the $\ULST(\bL^2)$ is $n$-extendible for any natural number $n$ (cf. \cite[Corollary 7.7]{gal2020}). They conjectured that $\ULST(\bL^2)$ is exactly the first extendible cardinal (cf. \cite[Conjecture 7.8]{gal2020}). We will confirm their conjecture (Theorem \ref{thm:ULSTSOL}). Moreover, we will show that the $\ULST(\bL^{s,n})$ is exactly the first $C^{(n)}$-extendible cardinal.\footnote{These results were also obtained by Gitman and the second author, using an entirely different argument (cf. \cite[Theorem 6.1]{gitman2024upward}). Yair Hayut found yet another proof of Theorem \ref{thm:ULSTSOL}, which remains unpublished.} 
	
	We will make use of \emph{truth predicates}. We assume that formulas in the language of set theory are coded as tuples of natural numbers. If $(M,\in)$ is a transitive set closed under the pairing function, a subset $T \subseteq M$ is called a \emph{truth predicate for $(M, \in)$} if for any first-order formula $\varphi(x_1, \dots, x_n)$ in the language of set theory, and any $a_1, \dots, a_n \in M$:
	\[
	(M, \in) \models \varphi(a_1, \dots, a_n) \text{ iff } (\varphi, a_1, \dots, a_n) \in T. 
	\]
	Magidor noted (cf. \cite{mag1971}) that there is a first-order sentence $\varphi_{\text{truth}}$ in the language $\{\in, T\}$ with an additional unary predicate $T$, such that for any transitive $(M,\in)$ and any $T^M \subseteq M$, 
	\[
	(M,\in,T^M) \models \varphi_{\text{Truth}} \text{ iff } T^M \text{is a truth predicate for } (M, \in).
	\]
	Further, we can consider the language $\{\in, T, c, S, P \}$ with an additional constant $c$, and predicates $S$ and $P$. We may fix sentences $\varphi_\emptyset$, $\varphi_{\text{Succ}}$, and $\varphi_{\text{Pair}}$ coding that in a transitive structure $\mathcal M = (M, \in, T^M, c^M, S^M, P^M)$, $c^M$ is the emptyset, $S^M$ codes the successor function, and $P^M$ codes the pairing function, so, for example, 
	\[
	\mathcal M \models \varphi_{\text{Pair}} \text{ iff } \forall x,y,z \in M:(P^M(x,y,z) \leftrightarrow z = (x,y)). 
	\] 
	It is easy to see that if $j: (M,\in, T^M,c^M, S^M,P^M) \rightarrow (N,\in, T^N, c^N, S^N, P^N)$ is an (not necessarily elementary) embedding between transitive structures satisfying the sentences $\varphi_{\text{Truth}}, \varphi_\emptyset, \varphi_{\text{Succ}}$, and $\varphi_{\text{Pair}}$, then $j$ is an elementary embedding between the structures $(M, \in)$ and $(N, \in)$. For details, consider \cite[Section 4.3]{osinski2024}. We are now ready to prove our theorem.
	\begin{theorem}\label{thm:ULSTSOL}
		Let $\delta$ be a cardinal. Then $\delta$ is the smallest extendible cardinal if and only if $\delta = \ULST(\bL^2)$. 
	\end{theorem}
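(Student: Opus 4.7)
The plan is to split the biconditional into two separate implications:
(A) if $\delta$ is extendible, then $\ULST(\bL^2)\leq\delta$; and
(B) if $\delta=\ULST(\bL^2)$, then $\delta$ is extendible.
From these the theorem follows by a minimality sandwich: when $\delta$ is the least extendible, (A) gives $\ULST(\bL^2)\leq\delta$ and (B) makes $\ULST(\bL^2)$ itself extendible, forcing $\ULST(\bL^2)=\delta$; conversely, if $\delta=\ULST(\bL^2)$, (B) gives extendibility of $\delta$ and (A) applied to any smaller extendible would contradict $\delta=\ULST(\bL^2)$.

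For (A), given $\mathcal A\models\varphi$ in $\bL^2[\tau]$ with $|A|\geq\delta$ and a target $\lambda$, I pick $\alpha\in C^{(2)}$ above $\lambda$ and the ranks of $\mathcal A,\tau,\varphi$, and use extendibility of $\delta$ to obtain $j\colon V_\alpha\to V_\beta$ elementary with $\crit(j)=\delta$ and $j(\delta)>\alpha$. Absoluteness of $\bL^2$-satisfaction at $V_\alpha,V_\beta$ transfers $V_\alpha\models``\mathcal A\models\varphi"$ into $j(\mathcal A)\models\varphi$. To turn $j(\mathcal A)$ into a literal superstructure of $\mathcal A$, I relabel via the bijection $f\colon j(A)\to A\cup(j(A)\setminus j``A)$ defined by $f(j(a))=a$ for $a\in A$ and $f(x)=x$ otherwise, and transfer the $\tau$-structure along $f$. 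Because $j\upharpoonright\mathcal A$ is already a $\tau$-isomorphism onto its image in $j(\mathcal A)$, the resulting $\mathcal B\cong j(\mathcal A)$ contains $\mathcal A$ as an honest $\tau$-substructure, so $\mathcal B\models\varphi$ by isomorphism invariance and $|B|\geq j(\delta)>\lambda$.

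For (B), given $\alpha>\delta$, I build a structure and sentence that force an extendibility embedding inside any proper superstructure. Let
\[
\mathcal A=(V_{\alpha+1},\in,T,\emptyset,S,P,c_\delta,c_\alpha),
\]
where $T$ is the truth predicate of $V_{\alpha+1}$ and $c_\delta=\delta$, $c_\alpha=\alpha$, and let $\varphi$ be the conjunction of $\Phi^*$, $\varphi_{\text{Truth}}\wedge\varphi_\emptyset\wedge\varphi_{\text{Succ}}\wedge\varphi_{\text{Pair}}$, ``$c_\delta<c_\alpha$ are ordinals'', and a disjunction $\psi_1\vee\psi_2$: here $\psi_1$ says ``every ordinal is $\leq c_\alpha$'' (which, together with $\Phi^*$ and $c_\alpha$ being an ordinal of the structure, pins the rank to $c_\alpha+1$), and $\psi_2$ asserts the existence of a second-order $F$ with domain $V_{c_\alpha+1}$ that fixes $V_{c_\delta}$ pointwise, has $F(c_\delta)>c_\alpha$, and satisfies $\forall\psi\in\omega\,\forall\bar a\in V_{c_\alpha+1}\colon T(\psi^{V_{c_\alpha+1}},\bar a)\leftrightarrow T(\psi,F(\bar a))$, which encodes first-order elementarity of $F$ via $T$. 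The starting $\mathcal A$ satisfies $\psi_1$ and hence $\varphi$. Applying $\ULST(\bL^2)=\delta$ with $\lambda>|V_{\alpha+1}|$ produces $\mathcal B\supseteq\mathcal A$ of size $\geq\lambda$ satisfying $\varphi$. By $\Phi^*$ and transitivity of $\mathcal A$, the Mostowski collapse identifies $\mathcal B\cong V_\beta$ with $V_{\alpha+1}\subseteq V_\beta$ identically; since $|V_\beta|>|V_{\alpha+1}|$, $\beta>\alpha+1$, so $\psi_1$ fails and $\psi_2$ yields a first-order elementary $F\colon V_{\alpha+1}\to V_\beta$ with $\crit(F)=\delta$ and $F(\delta)>\alpha$. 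Restricting $F$ to $V_\alpha$ gives an elementary $V_\alpha\to V_{F(\alpha)}$ witnessing $\alpha$-extendibility of $\delta$.

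The main obstacle is constructing the sentence $\varphi$ in (B): within a single finitary $\bL^2$-sentence it must force the structure to be some $V_\gamma$, rigidly fix the rank at $c_\alpha+1$ in the absence of an embedding, and express full first-order elementarity of the second-order function $F$. The truth-predicate apparatus reviewed just before the theorem is indispensable for the elementarity clause; $\Phi^*$ together with one first-order sentence on ordinals handles the other two conditions without any infinitary conjunctions. The relabeling step in (A) is routine but deserves care to verify that $\mathcal A$ is a genuine $\tau$-substructure of the transferred copy.
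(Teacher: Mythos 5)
Your direction (A) is fine: the direct argument via an extendibility embedding $j\colon V_\alpha\to V_\beta$ with $\alpha\in C^{(2)}$, followed by the standard relabeling to turn $j(\mathcal A)$ into a literal superstructure, is correct (the paper instead just observes $\ULST(\bL^2)\le\comp(\bL^2)$ and cites Magidor's theorem that $\comp(\bL^2)$ is the first extendible). The problem is in (B), at the step ``the Mostowski collapse identifies $\mathcal B\cong V_\beta$ with $V_{\alpha+1}\subseteq V_\beta$ identically.'' The ULST property only hands you $\mathcal B$ with $A\subseteq B$ and $\in^{\mathcal B}\cap A^2=\in\cap A^2$; nothing prevents elements of $A$ from acquiring new $\in^{\mathcal B}$-predecessors in $B\setminus A$, so $A$ need not be an $\in^{\mathcal B}$-transitive subset of $B$ and the collapse $\pi$ need not fix $A$ pointwise. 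Indeed, the opposite is the generic situation: the entire large-cardinal content of this theorem comes from $\pi\upharpoonright A$ being a \emph{nontrivial} elementary embedding with a critical point below $\alpha$. Once $\pi$ moves things, your argument breaks twice over: $\psi_1$ asserts ``every ordinal is $\le c_\alpha$'' about $\pi(c_\alpha^{\mathcal B})=\pi(\alpha)$, which may be enormous, so $\psi_1$ is not refuted by $|B|\ge\lambda$ and $\psi_2$ need not fire; and even when $\psi_2$ does hold, the internal embedding $F$ it produces has critical point $\pi(\delta)$ and target above $\pi(\alpha)$, which gives no information about $\delta$ itself. There is no way to legislate $\pi\upharpoonright A=\mathrm{id}$ into the sentence $\varphi$.

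The paper's proof is built around exactly this phenomenon rather than against it: it takes $\alpha$ of cofinality $\omega$, adds a cofinal function $f\colon\omega\to\alpha$ to the structure, and uses the truth-predicate/successor/pairing apparatus to show that $\pi\upharpoonright V_\alpha\colon V_\alpha\to V_{\beta_\alpha}$ is elementary; since $f$ must be stretched to a function cofinal in $\beta_\alpha>\alpha$, this embedding is forced to have a critical point $<\alpha$. The critical point is not controlled pointwise; instead one applies a class version of Fodor's Lemma to the regressive function $\alpha\mapsto\crit(j_\alpha)$ on the stationary class of $\alpha>\delta$ of cofinality $\omega$ to obtain a single $\kappa$ that is a critical point for a proper class of $\alpha$, hence extendible, and then a separate $\Pi_3$-reflection argument at the least extendible cardinal $\eta$ (using $\eta\in C^{(3)}$ and the $\Pi_3$-expressibility of ``$\delta$ has the ULST property'') shows $\eta\le\delta$. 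If you want to salvage your approach, you need to replace the identity claim by this ``uncontrolled critical point plus Fodor plus reflection'' scheme, or find some genuinely different way to pin down where the superstructure's collapse sends $\delta$ and $\alpha$.
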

	\begin{proof}
		First note that if $\delta$ is extendible, then $\delta = \comp(\bL^2)$ (cf. \cite{mag1971}), and using this, it is easy to see that $\ULST(\bL^2) \leq \comp(\bL^2) \leq \delta$.\footnote{That $\ULST(\cL) \leq \comp(\cL)$ for any logic $\cL$ can be derived analogously to the standard proof of the upward Löwenheim-Skolem theorem from the Compactness theorem for first-order logic (cf., e.g., \cite[Proposition 3.1]{gitman2024upward} where this is carried out).} It is therefore sufficient to show that if $\delta = \ULST(\bL^2)$, then there is an extendible cardinal $\leq \delta$. 
		
		Consider any ordinal $\alpha\geq\delta$ of cofinality $\omega$. Fix a function $f_\alpha$ with domain $\omega$ that is cofinal in $\alpha$, a truth predicate $T_\alpha$ for $(V_\alpha, \in)$ and relations $S_\alpha$ and $P_\alpha$ coding the successor and pairing functions. Then the structure $\mathcal A = (V_\alpha, \in, f_\alpha, T_\alpha, \emptyset, S_\alpha, P_\alpha)$ is a model of the conjunction of the following sentences in the language $\{ \in, f, T, c, S, P\}$, where $f$ is a two place predicate.
		\begin{enumerate}
			\item[(i)] Magidor's $\Phi$.
			\item[(ii)] $\varphi_{\text{Truth}} \wedge \varphi_\emptyset \wedge \varphi_{\text{Succ}} \wedge \varphi_{\text{Pair}}$.
			\item[(iii)] ``$f$ is a function with domain $\omega$ which is cofinal in the ordinals":
			\[
			\begin{split}
				&\forall x,y,z (f(x,y) \wedge f(x,z)\rightarrow y=z \wedge \Ord(y)) \wedge \\ & \forall x(x \in \omega \leftrightarrow \exists yf(x,y)) \wedge
				\forall \alpha (\Ord(\alpha) \rightarrow \exists x,\beta(\alpha < f(x,\beta))).
			\end{split}
			\]
		\end{enumerate}
		Because $\delta = \ULST(\cL^2)$ we find a superstructure $\mathcal A_\alpha^* = (A_\alpha, E_\alpha, f_\alpha^*, T_\alpha^*, c_\alpha^*, S_\alpha^*, P_\alpha^*)$ of size $> |V_\alpha|$ satisfying the above sentences (i) to (iii). By $\Phi$, we can collapse $\mathcal A_\alpha$ to a structure of the form $\mathcal A_\alpha^* = (V_{\beta_\alpha}, \in, f_{\beta_\alpha}, T_{\beta_\alpha}, c_{\beta_\alpha}, S_{\beta_\alpha, P_{\beta_\alpha}})$. Then the collapse isomorphism restricts to an embedding $j: \mathcal A \rightarrow \mathcal A_\alpha^*$. By (ii) and the comments above, $j$ is an elementary embedding $j_\alpha: (V_\alpha, \in) \rightarrow (V_{\beta_\alpha}, \in)$. By (iii), $f_{\beta_\alpha}$ is a function with domain $\omega$ which is cofinal in $\beta_\alpha$. Notice that $j_\alpha(f_\alpha(n)) = f_{\beta_\alpha}(j_\alpha(n)) = f_{\beta_\alpha}(n)$. Thus for some $n$, $j_\alpha(f_\alpha(n)) > f_\alpha(n)$ and so $j_\alpha$ has some critical point $\crit(j_\alpha) < \alpha$. Therefore, the function $F$ which sends $\alpha$ to the smallest value of a critical point $\crit(j_\alpha)$ of some elementary embedding $j_\alpha: V_\alpha \rightarrow V_{\gamma}$ is a definable function on the stationary class $S = \{\alpha > \delta \colon \cof(\alpha) = \omega\}$. We may use a version of Fodor's Lemma which shows that any regressive function on a definable stationary proper class of ordinals is constant on a unbounded class (cf. \cite{git2019}). Hence, $F$ is constant on an unbounded subclass of $S$, say with value $\kappa$. Then $\kappa$ is the critical point of $j_\alpha: (V_\alpha, \in) \rightarrow (V_\gamma, \in)$ for a proper class of $\alpha$ and therefore extendible.
		
		Hence, we can let $\eta$ be the smallest extendible cardinal.  We claim $\eta \leq \delta$. Suppose $\eta > \delta$. Notice that the assertion that $\delta$ is a $\ULST$ number of $\cL^2$ can be formalized by the following $\Pi_3$ formula:
		\[
		\forall \mathcal A \forall \varphi \forall \bar \gamma \exists \mathcal B(|\mathcal A| \geq \delta \wedge \mathcal A \models_{\cL^2} \varphi \rightarrow \mathcal A \subseteq \mathcal B \wedge |\mathcal B|\geq \bar \gamma \wedge \mathcal B \models_{\cL^2} \varphi ).
		\]
		This is a $\Pi_3$-statement as $\models_{\cL^2}$ is $\Delta_2$-definable (cf., e.g., \cite[Proposition 3.6]{gal2020}). By extendibility, $\eta \in C^{(3)}$ (cf. \cite[Proposition 23.10]{kan}) and so $\eta$ satisfies that $\delta = \ULST(\cL^2)$. Then we can repeat our argument in $V_\eta$ and find a cardinal $\nu < \eta$ such that $V_\eta \models ``\nu$ is extendible". Because also being extendible is a $\Pi_3$-statement, $V_\eta$ is correct about this fact, and so $\nu$ is really extendible. But this contradicts minimality of $\eta$. 
	\end{proof}
	Using that $\bL^{s,n}$ is able to express that some ordinal is in $C^{(n)}$, the above proof can be adapted to show the following theorem.
	\begin{theorem}\footnote{Again, this was also shown by different methods in \cite{gitman2024upward}.}
		A cardinal $\kappa$ is the first $C^{(n)}$-extendible iff $\kappa = \ULST(\bL^{s,n})$. 
	\end{theorem}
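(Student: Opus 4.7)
The plan is to transcribe the proof of Theorem \ref{thm:ULSTSOL} with Magidor's $\Phi$ replaced by the sort-logic sentence $\Phi^{(n)}$ that axiomatizes the class of $V_\alpha$ with $\alpha \in C^{(n)}$, and to upgrade the complexity-of-definability bookkeeping from $\Pi_3$ and $C^{(3)}$ to the appropriate level for $\bL^{s,n}$.

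For the forward direction, if $\delta$ is the first $C^{(n)}$-extendible, then by Fact \ref{fact:BoneySortLogic} we have $\delta = \comp(\bL^{s,n})$, and the same short argument that gives $\ULST(\bL^2) \leq \comp(\bL^2)$ in Theorem \ref{thm:ULSTSOL} yields $\ULST(\bL^{s,n}) \leq \comp(\bL^{s,n}) = \delta$.

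For the backward direction, suppose $\delta = \ULST(\bL^{s,n})$. For each $\alpha \in C^{(n)}$ with $\alpha \geq \delta$ and $\cof(\alpha) = \omega$, I would form the $\{\in, f, T, c, S, P\}$-structure $\mathcal A_\alpha = (V_\alpha, \in, f_\alpha, T_\alpha, \emptyset, S_\alpha, P_\alpha)$ where $f_\alpha : \omega \to \alpha$ is cofinal and $T_\alpha, S_\alpha, P_\alpha$ are the truth, successor, and pairing predicates. Then $\mathcal A_\alpha$ satisfies the $\bL^{s,n}$-sentence that is the conjunction of $\Phi^{(n)}$, $\varphi_{\text{Truth}} \wedge \varphi_\emptyset \wedge \varphi_{\text{Succ}} \wedge \varphi_{\text{Pair}}$, and the assertion that $f$ is a cofinal map from $\omega$ into $\Ord$. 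Applying the ULST property I obtain a superstructure of size $> |V_\alpha|$ satisfying the same sentence; by $\Phi^{(n)}$ it collapses to a structure of the form $(V_{\beta_\alpha}, \in, f_{\beta_\alpha}, T_{\beta_\alpha}, \ldots)$ with $\beta_\alpha \in C^{(n)}$, and the collapsed inclusion is an elementary embedding $j_\alpha: (V_\alpha,\in) \to (V_{\beta_\alpha}, \in)$ by the remarks on $\varphi_{\text{Truth}}$ etc. The cofinal-function clause forces $\crit(j_\alpha) < \alpha$ exactly as in the proof of Theorem \ref{thm:ULSTSOL}. The class-sized Fodor lemma from \cite{git2019} applied to the definable regressive function $\alpha \mapsto \crit(j_\alpha)$ on the stationary class $\{\alpha \in C^{(n)} : \cof(\alpha) = \omega\}$ stabilizes this critical point at some $\kappa$; then for unboundedly many $\alpha \in C^{(n)}$ there is a $\beta_\alpha \in C^{(n)}$ and an elementary $j: V_\alpha \to V_{\beta_\alpha}$ with $\crit(j) = \kappa$, which by the characterization of \cite{gitman2019CnExt} mentioned after Fact \ref{fact:BoneySortLogic} makes $\kappa$ a $C^{(n)}$-extendible.

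For minimality, let $\eta$ be the least $C^{(n)}$-extendible and assume toward contradiction $\eta > \delta$. The assertion $\delta = \ULST(\bL^{s,n})$ can be written as a $\forall \mathcal A \, \forall \varphi \, \forall \gamma \, \exists \mathcal B$ statement whose complexity is bounded by $\Pi_{n+2}$, since $\models_{\bL^{s,n}}$ is $\Delta_{n+1}$-definable. Since $C^{(n)}$-extendibility is itself $\Pi_{n+2}$ and therefore places $\eta$ in $C^{(n+2)}$ (cf.\ \cite{bag2012}), $V_\eta$ correctly computes $\delta = \ULST(\bL^{s,n})$; re-running the previous paragraph inside $V_\eta$ produces some $\nu < \eta$ which $V_\eta$ sees to be $C^{(n)}$-extendible, and $\Pi_{n+2}$-absoluteness between $V_\eta$ and $V$ makes $\nu$ actually $C^{(n)}$-extendible, contradicting minimality of $\eta$. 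The main obstacle is the complexity bookkeeping in this last paragraph: confirming the exact definability class of the sort-logic satisfaction relation and that $C^{(n)}$-extendibles sit high enough in the $C^{(k)}$ hierarchy to reflect both the ULST statement and $C^{(n)}$-extendibility itself. Once this is pinned down, the rest is a faithful transcription of the second-order argument.
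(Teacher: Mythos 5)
Your proposal is exactly the adaptation the paper has in mind (its own ``proof'' is the single remark that the argument for Theorem \ref{thm:ULSTSOL} carries over once $\Phi$ is replaced by $\Phi^{(n)}$), and every adjustment you make --- restricting to the stationary class of $\omega$-cofinal points of $C^{(n)}$, reading off $C^{(n)}$-extendibility via the Gitman--Hamkins characterization, and upgrading the reflection step --- is the right one. The bookkeeping you flag does check out: $\models_{\bL^{s,n}}$ is $\Delta_{n+1}$-definable by Väänänen's normal form theorem, so the ULST assertion is $\Pi_{n+2}$, and since $C^{(n)}$-extendible cardinals lie in $C^{(n+2)}$ while $C^{(n)}$-extendibility is itself (at most) $\Pi_{n+2}$, the minimality argument goes through exactly as in the second-order case.
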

	\begin{corollary}
		VP is equivalent to the schema ``Every logic has a ULST number."
		\begin{proof}
			``Every logic has a ULST number." implies that for any $n$, $\bL^{s, n}$ has a ULST number. By the previous theorem, this shows that for any $n$ there is a $C^{(n)}$-extendible cardinal, and so VP holds by Bagaria's result \ref{fact:BagariaMakowsky}. 
			
			On the other hand, by Makowsky's \ref{fact:BagariaMakowsky}, VP implies that every logic has a compactness number which in turn implies that every logic has a ULST number.
		\end{proof}
	\end{corollary}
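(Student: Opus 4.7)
The plan is to derive the equivalence by chaining the preceding theorem about $\ULST(\bL^{s,n})$ with Fact \ref{fact:BagariaMakowsky}, which gives two equivalent forms of VP: the existence of $C^{(n)}$-extendible cardinals for every $n$, and the existence of a compactness number for every logic.

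For the forward direction, I would assume the schema ``every logic has a ULST number'' and apply it to the sort logics $\bL^{s,n}$ for each natural number $n$. By the theorem immediately preceding the corollary, $\ULST(\bL^{s,n})$ exists precisely when the first $C^{(n)}$-extendible cardinal exists, so this gives a $C^{(n)}$-extendible cardinal for every $n$. Appealing to Fact \ref{fact:BagariaMakowsky}(i) then yields VP.

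For the reverse direction, I would assume VP and use Fact \ref{fact:BagariaMakowsky}(ii) to conclude that every logic $\cL$ has a compactness number $\kappa := \comp(\cL)$. I would then argue $\ULST(\cL) \leq \comp(\cL)$ by the following standard argument: given $\varphi \in \cL[\tau]$ witnessed by a structure $\mathcal A$ with $|A| \geq \kappa$ and an arbitrary target cardinal $\lambda$, form the theory
\[
T = \{\varphi\} \cup \mathrm{Diag}(\mathcal A) \cup \{c_i \neq c_j : i < j < \lambda\}
\]
in the expansion of $\tau$ by a constant for each element of $A$ and fresh constants $c_i$ for $i < \lambda$. Any subtheory $T_0 \subseteq T$ of size ${<}\kappa$ mentions fewer than $\kappa$ of the $c_i$, which can be interpreted as distinct elements of $A$ (using $|A| \geq \kappa$), so $\mathcal A$ itself expands to a model of $T_0$. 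Hence $T$ is ${<}\kappa$-satisfiable, so by the definition of $\comp(\cL)$ it has a model $\mathcal B$. The diagram part ensures $\mathcal A \subseteq \mathcal B$, the new constants force $|B| \geq \lambda$, and $\varphi \in T$ forces $\mathcal B \models_{\cL} \varphi$, witnessing $\ULST(\cL) \leq \kappa$.

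The main technical point is the compactness-to-ULST bound in the reverse direction; it is routine but relies on the standard niceness assumptions on abstract logics (closure under expansions by constants and conjunctions with first-order atomic formulas, as referenced in Section \ref{sec:AMT}) so that the diagram argument goes through uniformly. No additional obstacle arises, since both directions reduce to invoking the preceding theorem on $\ULST(\bL^{s,n})$ or the established equivalence in Fact \ref{fact:BagariaMakowsky}.
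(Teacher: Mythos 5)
Your proposal is correct and follows exactly the paper's route: the forward direction applies the preceding theorem on $\ULST(\bL^{s,n})$ together with Fact \ref{fact:BagariaMakowsky}(i), and the reverse direction goes through Makowsky's compactness characterization via the bound $\ULST(\cL)\leq\comp(\cL)$. The only difference is that you spell out the diagram-plus-new-constants argument for that bound, which the paper treats as immediate (it invokes the same observation in the proof of Theorem \ref{thm:ULSTSOL}); your version of it is sound.
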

    We summarize the known connections between VP and model theory in Figure 2 below. Here the term \emph{$n$-SHC numbers} refers to the compactness properties for strong Henkin models considered in \cite{osinski2024Henkin}. This naming follows \cite{osinski2024}. The characterizations of extendible and {$C^{(n)}$-extendible} cardinals by ULST properties provided here are highlighted in red.

    \begin{figure}[h]\label{fig2}
	\centering
	\noindent \includegraphics[width=0.99\hsize]{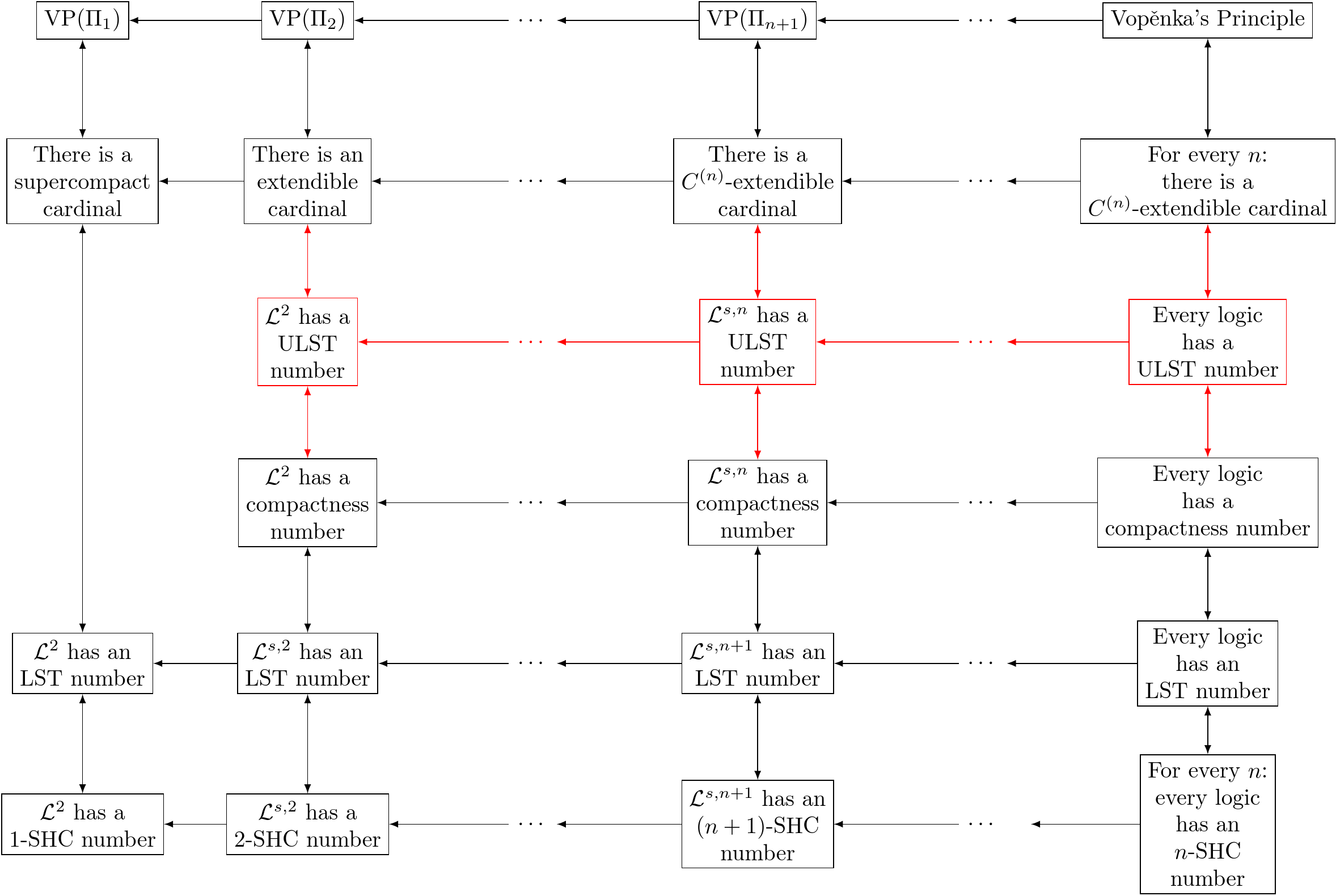}
	\caption{Relations between $\VP$, $C^{(n)}$-extendible cardinals, ULST numbers, compactness numbers, LST numbers, and SHC numbers.}
    \end{figure}

	\bibliography{bibliography}{}
	\bibliographystyle{alpha}
\end{document}